\def\itm#1{\rm ({#1})} 
\def\itmit#1{\itm{\it #1\,}} 
\def\rom{\itmit{\roman{*}}} 
\def\abc{\itmit{\alph{*}}}
\def\arab{\itm{\arabic{*}}} 
\def\endofClaim{\hfill\scalebox{.6}{$\Box$}}
\def\l{\ell}
\def\phi{\varphi}
\def\ge{\geqslant}
\def\CC{\mathbb{C}}
\def\NN{\mathbb{N}}
\def\PP{\mathbb{P}}
\def\RR{\mathbb{R}}
\def\SS{\mathbb{S}}
\def\ZZ{\mathbb{Z}}
\newtheorem{theorem}{Theorem}[section]
\newtheorem{lemma}[theorem]{Lemma}
\newtheorem{corollary}[theorem]{Corollary}
\newtheorem{prop}[theorem]{Proposition}
\newtheorem{prob}[theorem]{Problem}
\theoremstyle{definition}
\newtheorem*{definition}{Definition}
\theoremstyle{remark}
\newcommand{\oldqed}{}
\title{Ordinary hyperspheres and spherical curves}
\author{Aaron Lin\footnote{Department of Mathematics, London School of Economics and Political Science, United Kingdom.} \and Konrad Swanepoel\footnotemark[1]}
\date{}
\begin{document}

\maketitle

\begin{abstract}
An \emph{ordinary hypersphere} of a set of points in real $d$-space, where no $d+1$ points lie on a $(d-2)$-sphere or a $(d-2)$-flat, is a hypersphere (including the degenerate case of a hyperplane) that contains exactly $d+1$ points of the set.
Similarly, a \emph{$(d+2)$-point hypersphere} of such a set is one that contains exactly $d+2$ points of the set.
We find the minimum number of ordinary hyperspheres, solving the $d$-dimensional spherical analogue of the Dirac--Motzkin conjecture for $d \ge 3$.
We also find the maximum number of $(d+2)$-point hyperspheres in even dimensions, solving the $d$-dimensional spherical analogue of the orchard problem for even $d \ge 4$.
\end{abstract}

\section{Introduction}\label{sec:intro}

An \emph{ordinary line} of a planar point set is a line that contains exactly two points of the set, while a \emph{$3$-point line} is one that contains exactly three points.
Green and Tao~\cite{GT13} proved tight bounds for the minimum number of ordinary lines and maximum number of $3$-point lines spanned by a sufficiently large non-collinear planar point set, solving the so-called Dirac--Motzkin conjecture and the classical orchard problem respectively.
They also showed that any extremal configuration for the Dirac--Motzkin conjecture is contained (up to a single point) in the union of a conic and a disjoint line, while the extremal configurations for the orchard problem lie on irreducible cubic curves.

The natural extension to circles was considered in~\cite{LMMSSZ18}, where an \emph{ordinary circle} of a planar point set is a circle (including the degenerate case of a line) that contains exactly three points of the set, and a \emph{$4$-point circle} is a circle (or line) that contains exactly four points.
Tight bounds on the minimum number of ordinary circles and maximum number of $4$-point circles spanned by a finite set of points in the plane, not all on a circle or a line, were found in \cite{LMMSSZ18}, solving the analogues of the Dirac--Motzkin conjecture and orchard problem for circles.
Any extremal configuration for the Dirac--Motzkin conjecture for circles is contained in the union of two disjoint circles, while any extremal configuration for the orchard problem for circles is contained in an irreducible algebraic curve of circular degree $2$ (see Section~\ref{sec:curves} for the definition of circular degree).

Both cases above relied on structure theorems that characterised point sets spanning few ordinary lines~\cite{GT13}*{Theorem 1.5} and circles~\cite{LMMSSZ18}*{Theorem 1.5} respectively.
In particular, such point sets are mostly contained in certain low-degree algebraic curves.

The purpose of this note is to extend these results to hyperspheres in dimensions $3$ and above, and where possible, to understand the curves on which the near-extremal configurations lie.
Just as the structure theorem in \cite{LMMSSZ18} built on that of \cite{GT13}, the structure theorem for ordinary hyperspheres that we prove here (Theorem~\ref{thm:structure}) relies on one for ordinary hyperplanes from our previous paper \cite{LS18-2} (Theorem~\ref{thm:hyperplane} below).
\begin{definition}
An \emph{ordinary hypersphere} of a set of points in $\RR^d$, where no $d+1$ points lie on a $(d-2)$-sphere or a $(d-2)$-flat, is a hypersphere (including the degenerate case of a hyperplane) that contains exactly $d+1$ points of the set.

A \emph{$(d+2)$-point hypersphere} of such a set is one that contains exactly $d+2$ points of the set.
\end{definition}

On a related note, Purdy and Smith~\cite{PS10} considered ordinary spheres in $3$-space in the slightly more restricted setting of
a finite set of points with no four concyclic and no \emph{three} collinear.
We include hyperplanes as degenerate spheres because the collection of all hyperspheres and hyperplanes is closed under inversion (see Section~\ref{sec:curves}).

Theorems~\ref{thm:extremal} and~\ref{thm:extremal2} below are our main results.
We find the exact minimum number of ordinary hyperspheres in odd dimensions and prove tight asymptotic bounds in even dimensions. 
However, we were only able to prove tight asymptotic bounds on the maximum number of $(d+2)$-point hyperspheres in even dimensions.
This solves the spherical analogue of the Dirac--Motzkin conjecture for $d \ge 3$, and of the orchard problem for even $d \ge 4$.

\begin{theorem}\label{thm:extremal}
Let $d \ge 3$ and let $n \ge Cd^32^d$ for some sufficiently large absolute constant $C>0$.
Let $P$ be a set of $n$ points in $\RR^d$ where no $d+1$ points lie on a $(d-2)$-sphere or a $(d-2)$-flat.
If $P$ is not contained in a hypersphere or a hyperplane, then
the minimum number of ordinary hyperspheres spanned by $P$ is exactly $\binom{n-1}{d}$ if $d$ is odd and is
\[ \binom{n-1}{d} - O\left(d 2^{-\frac{d}{2}}\binom{n}{d/2}\right) \]
if $d$ is even.
\end{theorem}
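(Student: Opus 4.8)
The plan is to reduce the spherical problem in $\RR^d$ to the hyperplane problem in $\RR^{d+1}$ via the standard lifting map $\pi\colon \RR^d \to \RR^{d+1}$, $x \mapsto (x, \lvert x\rvert^2)$, which sends hyperspheres and hyperplanes of $\RR^d$ to hyperplanes of $\RR^{d+1}$ that are non-vertical (or, in the degenerate hyperplane case, to the vertical hyperplane at infinity after one chooses coordinates appropriately). Under this correspondence, the condition that no $d+1$ points of $P$ lie on a $(d-2)$-sphere or $(d-2)$-flat translates into the condition that the lifted set $\widehat P = \pi(P) \subseteq \RR^{d+1}$ is in sufficiently general position — specifically, no $d+1$ of its points lie on a $(d-2)$-flat — so that the hyperplane structure theorem (Theorem~\ref{thm:hyperplane}) and the counting results for ordinary hyperplanes apply to $\widehat P$. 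An ordinary hypersphere of $P$ (containing exactly $d+1$ points) corresponds exactly to an ordinary hyperplane of $\widehat P$ in $\RR^{d+1}$, so the minimum number of ordinary hyperspheres of $P$ equals the minimum number of ordinary hyperplanes of an $n$-point set $\widehat P \subseteq \RR^{d+1}$ lying on the paraboloid $\{x_{d+1} = \lvert (x_1,\dots,x_d)\rvert^2\}$, not all on a single hyperplane.

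Next I would invoke the structure theorem for ordinary hyperspheres, Theorem~\ref{thm:structure} (equivalently, the hyperplane structure theorem applied to $\widehat P$): when $P$ spans few ordinary hyperspheres, all but a bounded number of its points lie on a bounded-degree algebraic variety of a restricted type — in the spherical setting, this is an algebraic curve or surface of small circular degree, and in the lifted picture a low-degree variety contained in the paraboloid. One then enumerates the possible near-extremal configurations: these should be (up to $O(1)$ points) either contained in the intersection of the paraboloid with a rational normal curve of degree $d$, or related coset-type constructions built from an elliptic curve or a union of lower-dimensional pieces, exactly paralleling the hyperplane case of \cite{LS18-2}. For each such configuration one computes the exact number of ordinary hyperspheres. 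The dominant term $\binom{n-1}{d}$ arises from the "generic" extremal family (a single point together with $n-1$ points on a rational normal curve on the paraboloid, whose ordinary hyperplanes through the special point number $\binom{n-1}{d}$); in odd dimensions this construction is optimal and the bound is exact, while in even dimensions a parity obstruction — the same one that appears for ordinary hyperplanes in even ambient dimension — forces the elliptic-curve-type constructions to do slightly better, saving $\Theta\!\left(d\,2^{-d/2}\binom{n}{d/2}\right)$ ordinary hyperspheres, which accounts for the error term.

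The main obstacle I expect is two-fold. First, verifying that the lifting map really does preserve the relevant general-position hypotheses in both directions, and handling the degenerate hyperplane-as-sphere case cleanly (this is where the "closed under inversion" remark in the introduction is doing work: one should think projectively, or pass to the inversive model, so that hyperplanes and hyperspheres are genuinely interchangeable and the vertical-hyperplane issue disappears). Second, and more seriously, pinning down the exact extremal count in even dimensions: one must carry out a careful case analysis of all near-optimal configurations on the paraboloid, compute the number of ordinary hyperspheres for each to leading and sub-leading order, and identify precisely which elliptic-type construction minimizes it — the bookkeeping here is delicate because the savings term is lower order and depends on subtle parity and coset-size considerations, and one must also prove a matching lower bound showing no configuration does better than $\binom{n-1}{d} - O\!\left(d\,2^{-d/2}\binom{n}{d/2}\right)$. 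This matching lower bound, rather than the constructions, is the crux: it requires the full strength of the structure theorem together with a robust estimate for the number of ordinary hyperplanes spanned by any large subset of a bounded-degree curve on the paraboloid.
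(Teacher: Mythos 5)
Your reduction is essentially the paper's: you lift $P$ to a quadric in $\RR^{d+1}$ (the paraboloid rather than the sphere via inverse stereographic projection --- an immaterial projective change of model) so that hyperspheres become hyperplane sections, and the problem becomes the ordinary hyperplane problem in ambient dimension $d+1$. From there the even-dimensional bound follows by directly citing the extremal theorem for ordinary hyperplanes (Theorem~\ref{thm:hyperplane2}, from \cite{LS18-2}); you do not need to redo the case analysis of near-optimal configurations that your second paragraph proposes, since that theorem already supplies both the constructions and the matching lower bound for arbitrary point sets in general position in $\RR\PP^{d+1}$, a fortiori for those on the quadric. (Two minor slips: the lifted general-position condition is that no $d+1$ points of $\widehat{P}$ lie on a $(d-1)$-flat of $\RR^{d+1}$, not a $(d-2)$-flat; and affine hyperplanes of $\RR^d$ lift to \emph{vertical} hyperplanes, not to the hyperplane at infinity.)

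The genuine gap is in the odd-dimensional case, the only case where an exact answer is claimed. You attribute the absence of the lower-order saving to ``a parity obstruction --- the same one that appears for ordinary hyperplanes in even ambient dimension,'' but there is no such obstruction in the hyperplane problem: by Theorem~\ref{thm:hyperplane2} the curve-type cosets beat the hyperplane-plus-point count by the error term in every ambient dimension $\ge 4$, regardless of parity. The actual mechanism is specific to the spherical setting. A near-extremal set for the hyperplane problem in $\RR\PP^{d+1}$ that is not close to a hyperplane section lies, up to few points, on an elliptic normal curve or rational acnodal curve of degree $d+2$; since $\widehat{P}$ lies on the quadric, B\'ezout (Theorem~\ref{thm:bezout}) forces that curve to be contained in the quadric and hence to be bounded, whereas Lemma~\ref{lem:odd} (via Lemma~\ref{lem:realideal}) shows that a non-degenerate real curve of odd degree meets every real hyperplane --- in particular the hyperplane at infinity --- and so is unbounded. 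For odd $d$ the degree $d+2$ is odd, so these configurations cannot occur, leaving only configurations close to a hyperplane section; one then needs the exact (rather than asymptotic) extremal counts of \cite{LS18-2} to conclude that none of these does better than $\binom{n-1}{d}$. Without this boundedness-versus-odd-degree argument your proposal does not establish the exact value $\binom{n-1}{d}$ for odd $d$.
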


We will show that this minimum number of ordinary hyperspheres is attained, up to inversions, by a coset of a finite subgroup of a certain type of rational normal curve or elliptic normal curve, and when $d+1$ and $n$ are coprime, by $n-1$ points in general position on a hypersphere or a hyperplane together with a point not on the hypersphere or hyperplane.
It turns out that in odd dimensions, this latter trivial example 
is the only extremal configuration, spanning exactly $\binom{n-1}{d}$ ordinary hyperspheres.
For a detailed description of the curves just mentioned together with the groups defined on them, see our paper \cite{LS18-2}.

\begin{theorem}\label{thm:extremal2}
Let $d \ge 3$ and let $n \ge Cd^32^d$ for some sufficiently large absolute constant $C>0$.
Let $P$ be a set of $n$ points in $\RR^d$ where no $d+1$ points lie on a $(d-2)$-sphere or a $(d-2)$-flat.
Then the maximum number of $(d+2)$-point hyperspheres spanned by $P$ is bounded above by
\[ \frac{1}{d+2} \binom{n-1}{d+1} + O\left(2^{-\frac{d}{2}}\binom{n}{d/2}\right). \]
\end{theorem}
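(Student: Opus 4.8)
The plan is to derive Theorem~\ref{thm:extremal2} from the lower bound on ordinary hyperspheres in Theorem~\ref{thm:extremal} by a single double-counting identity: an abundance of $(d+2)$-point hyperspheres forces a scarcity of ordinary ones, the two counts being linked by the fact that every $d+1$ points of $P$ lie on a unique hypersphere (or hyperplane).

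The first step is to record that, under the genericity hypothesis, every $(d+1)$-subset of $P$ lies on exactly one hypersphere or hyperplane. If these $d+1$ points are affinely independent, they span $\RR^d$ and lie on their unique circumsphere (and on no hyperplane). If they are affinely dependent, then since no $d+1$ of them lie on a $(d-2)$-flat they span exactly a hyperplane $H$, the unique hyperplane through them; moreover no genuine hypersphere can contain all of them, as it would meet $H$ in a $(d-2)$-sphere carrying all $d+1$ points, contrary to genericity. Uniqueness in the first case holds because two distinct hyperspheres, or a hypersphere and a hyperplane, meet in a set of dimension at most $d-2$. (Equivalently, this says the standard lift of $P$ to the paraboloid in $\RR^{d+1}$ is a set of $n$ points in general position.)

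Write $s_k$ for the number of hyperspheres, hyperplanes included, that meet $P$ in exactly $k$ points. Sorting the $(d+1)$-subsets of $P$ by the unique hypersphere through each gives
\[ \binom{n}{d+1} \;=\; \sum_{k\ge d+1}\binom{k}{d+1}\,s_k \;\ge\; s_{d+1} + (d+2)\,s_{d+2}, \]
so $(d+2)\,s_{d+2} \le \binom{n}{d+1} - s_{d+1}$. Since $P$ is not contained in a hypersphere or a hyperplane, Theorem~\ref{thm:extremal} gives $s_{d+1} \ge \binom{n-1}{d} - O\big(d\,2^{-d/2}\binom{n}{d/2}\big)$, the error term being absent when $d$ is odd. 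Substituting and using $\binom{n}{d+1} - \binom{n-1}{d} = \binom{n-1}{d+1}$ yields
\[ (d+2)\,s_{d+2} \;\le\; \binom{n-1}{d+1} + O\big(d\,2^{-d/2}\binom{n}{d/2}\big). \]
Dividing by $d+2$, absorbing the harmless factor $d/(d+2)$ into the error term, and noting that every $(d+2)$-point hypersphere is spanned by $P$ (being determined by any $d+1$ of its points) gives exactly the claimed bound.

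Since everything rests on Theorem~\ref{thm:extremal}, there is no genuine obstacle beyond the routine verification of the counting identity, in particular the claim that $d+1$ points always determine a unique generalized hypersphere. I would only flag that this argument delivers only an upper bound: the matching lower-bound constructions are cosets of finite subgroups of curves along which a generic hypersphere cuts out exactly $d+2$ points — curves of circular degree $(d+2)/2$ — which exist only when $(d+2)/2$ is an integer, explaining why the orchard problem for hyperspheres is settled only in even dimensions.
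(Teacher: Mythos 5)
Your double-counting derivation is correct, but it is a genuinely different route from the paper's. The paper proves Theorem~\ref{thm:extremal2} by stereographically lifting $P$ to $\pi^{-1}(P)\subset\overline{\SS^d}\subset\RR\PP^{d+1}$, a set of $n$ points with every $d+1$ spanning a hyperplane, and then quoting the extremal result for $(d+1)$-point hyperplanes (Theorem~\ref{thm:hyperplane3}, applied in dimension $d+1$), exactly as in the proof of Theorem~\ref{thm:structure}; the only extra remark concerns why the matching constructions disappear for odd $d$. You instead deduce the upper bound from the \emph{lower} bound on ordinary hyperspheres (Theorem~\ref{thm:extremal}) via the identity $\binom{n}{d+1}=\sum_{k\ge d+1}\binom{k}{d+1}s_k\ge s_{d+1}+(d+2)s_{d+2}$ and Pascal's rule; your verification that each $(d+1)$-subset determines a unique generalized hypersphere is sound (and is precisely the general-position statement for the lift), and the arithmetic, including the absorption of the factor $d/(d+2)$ into the error term, checks out. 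What your approach buys is economy: given Theorem~\ref{thm:extremal}, the rest is elementary combinatorics, and for odd $d$ you even get the clean bound $\frac{1}{d+2}\binom{n-1}{d+1}$ with no error term. What it loses is the structural information: the paper's route through Theorem~\ref{thm:hyperplane3} simultaneously identifies the near-extremal configurations (cosets on spherical elliptic/rational normal curves) that make the bound tight in even dimensions, which your argument cannot see --- as you yourself note.

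One small point to repair: you invoke the hypothesis ``$P$ is not contained in a hypersphere or a hyperplane'' when applying Theorem~\ref{thm:extremal}, but Theorem~\ref{thm:extremal2} does not assume this. The excluded case is harmless --- if $P$ lies on a single generalized hypersphere $V$, then any other generalized hypersphere meets $V$ in a $(d-2)$-sphere or $(d-2)$-flat and hence contains at most $d$ points of $P$, so $s_{d+2}=0$ for $n>d+2$ --- but it should be said explicitly, since otherwise your proof does not cover all sets $P$ admitted by the statement.
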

We will show that when the dimension is even, the maximum value is also attained by a coset of a finite subgroup of a certain type of elliptic normal curve or rational normal curve.
Thus the bound of Theorem~\ref{thm:extremal2} is tight when $d$ is even.
On the other hand, when $d$ is odd, we do not have any lower bound that is superlinear in $n$, nor can we show an upper bound of the form $\frac{c}{d+2}\binom{n-1}{d+1}$ where $c<1$.

\begin{prob}
Determine the maximum number of $(d+2)$-point hyperspheres spanned by a set of $n$ points in $\RR^d$ where no $d+1$ points lie on a $(d-2)$-sphere or $(d-2)$-flat, for odd $d\ge 3$.
\end{prob}

While the above results include an asymptotic error term for even dimensions, the exact extremal values can be computed recursively for $n\ge Cd^32^d$ for some sufficiently large absolute constant $C>0$.
For example, if $d=4$, the minimum number of ordinary hyperspheres is
\[ 
\begin{cases}
\binom{n-1}{4} - \frac{1}{8}n^2 + \frac{1}{12} n - 1 & \text{if } n \equiv 0 \pmod{6},\\
\binom{n-1}{4} & \text{if } n \equiv 1, 5 \pmod{6},\\
\binom{n-1}{4} -\frac{1}{8}n^2 + \frac{3}{4}n - 1 & \text{if } n \equiv 2, 4 \pmod{6},\\
\binom{n-1}{4} - \frac{2}{3}n + 2 & \text{if } n \equiv 3 \pmod{6},
\end{cases}
\]
and the maximum number of $6$-point hyperspheres is
\[ 
\begin{cases}
\frac{1}{6}\binom{n-1}{5} + \frac{1}{48}n^2 - \frac{1}{72}n + \frac16 & \text{if } n \equiv 0 \pmod{6},\\
 \frac{1}{6}\binom{n-1}{5} & \text{if } n \equiv 1, 5 \pmod{6},\\
 \frac{1}{6}\binom{n-1}{5} + \frac{1}{48}n^2 - \frac{1}{8}n +\frac16 & \text{if } n \equiv 2, 4 \pmod{6},\\
 \frac{1}{6}\binom{n-1}{5} +\frac{1}{9}n -\frac{1}{3} & \text{if } n \equiv 3 \pmod{6}.
\end{cases}
\]
For fixed $d$, these values are quasipolynomials of period~$2(d+2)$ for sufficiently large~$n$.
(A function $f\colon\NN\to\ZZ$ is a \emph{quasipolynomial of period $k$} if there exist polynomials $p_0,p_1,\dots,p_{k-1}$ such that $f(n) = p_i(n)$ where $n\equiv i\pmod{k}$.)

In contrast to the $2$-dimensional situation, there are no extremal configurations on reducible curves when $d\ge 3$.
This and Theorems~\ref{thm:extremal} and~\ref{thm:extremal2} are due to the following structure theorem for sets with few ordinary hyperspheres.
We introduce spherical curves in Section~\ref{sec:curves}, before proving our theorems in Sections~\ref{sec:proof} and~\ref{sec:proof2}.

\begin{theorem}\label{thm:structure}
Let $d \ge 3$, $K > 0$, and suppose $n > C\max\{(dK)^8,d^32^dK\}$ for some sufficiently large absolute constant $C > 0$.
Let $P$ be a set of $n$ points in $\RR^d$ where no $d+1$ points lie on a $(d-2)$-sphere or a $(d-2)$-flat.
Suppose $P$ spans at most $K\binom{n}{d}$ ordinary hyperspheres.

If $d$ is odd, then all but at most $O(d2^dK)$ points of $P$ lie on a hypersphere or a hyperplane.

If $d = 2k$ is even, then up to inversions, $P$ differs in at most $O(d2^dK)$ points from a configuration of one of the following types:
\begin{enumerate}[label=\rom]
\item A subset of a hyperplane;
\item A coset $H \oplus x$ of a subgroup $H$ of a bounded $(k-1)$-spherical rational normal curve of degree $d$, for some $x$ such that $(d+2)x \in H$;
\item A coset $H \oplus x$ of a subgroup $H$ of a $k$-spherical elliptic normal curve, for some $x$ such that $(d+2)x \in H$.
\end{enumerate}
\end{theorem}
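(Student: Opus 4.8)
The plan is to reduce the $d$-dimensional sphere problem to the $(d-1)$-dimensional hyperplane problem via inversion, and then lift the resulting structure back up. Concretely, I would first fix a point $p\in P$ and apply an inversion $\iota_p$ centred at $p$; this sends hyperspheres and hyperplanes through $p$ to hyperplanes, and hyperspheres and hyperplanes not through $p$ to hyperspheres. Under this map, ordinary hyperspheres of $P$ through $p$ become ordinary hyperplanes of the $(n-1)$-point set $P' := \iota_p(P\setminus\{p\})$, which still satisfies the relevant general-position hypothesis (no $d$ points of $P'$ on a $(d-2)$-flat). A counting/averaging step over the choices of $p\in P$ shows that, since $P$ spans at most $K\binom{n}{d}$ ordinary hyperspheres in total, a positive proportion of the points $p$ have the property that $P'$ spans at most $O(dK)\binom{n-1}{d-1}$ ordinary hyperplanes; pick one such $p$.

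Next I would invoke the hyperplane structure theorem (Theorem~\ref{thm:hyperplane} from \cite{LS18-2}) on $P'$: all but $O(d2^dK)$ points of $P'$ lie on a hyperplane, or, when $d-1$ is even, differ by few points from a bounded rational normal curve of degree $d-1$ or an elliptic normal curve of degree $d$ (the precise list in \cite{LS18-2}). Pulling back through $\iota_p$, the image of a hyperplane is a hypersphere or hyperplane through $p$, and the images of those normal curves are precisely the $(k-1)$-spherical rational normal curves of degree $d$ and $k$-spherical elliptic normal curves appearing in the statement — this is exactly why the notion of ``spherical curve'' is set up the way it is in Section~\ref{sec:curves}, and verifying that inversion carries the curve families to each other (and interacts correctly with the group laws, so cosets go to cosets) is the technical heart of the argument. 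This gives, for our chosen $p$, that all but $O(d2^dK)$ points of $P\setminus\{p\}$ lie on one of the claimed configurations $\Gamma$ (with $p$ possibly off $\Gamma$).

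Then I would bootstrap to control $p$ itself and to get the exact coset structure. Running the same inversion argument from a second point $q\in P$ that lies on $\Gamma$ produces a second curve $\Gamma'$ containing almost all of $P$; since two such curves sharing $\gg d$ points of a set in general position must coincide (a Bézout-type bound for spherical curves of bounded circular degree, again from Section~\ref{sec:curves}), we conclude $\Gamma=\Gamma'$ and in particular $p\in\Gamma$. So all but $O(d2^dK)$ points of $P$ lie on a single curve $\Gamma$ from the list (or a hyperplane). Finally, to upgrade ``subset of $\Gamma$'' to ``coset $H\oplus x$ with $(d+2)x\in H$'': the group law on $\Gamma$ is characterised by the property that $d+1$ points of $\Gamma$ are cohyperspherical iff they sum to a fixed element; a point set on $\Gamma$ spanning few ordinary hyperspheres must therefore be nearly closed under the relevant ``$(d+1)$-fold'' additive relations, and a Freiman-type / additive-combinatorics argument (mirroring the one used for ordinary lines and circles in \cite{GT13,LMMSSZ18}) forces the large part of $P\cap\Gamma$ to lie in a coset of a finite subgroup, with the congruence condition on $x$ coming from the normalisation of the group.

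The main obstacle I anticipate is the curve-theoretic bookkeeping in Section~\ref{sec:curves}: showing that inversion is a bijection between the stated families of spherical curves (including the degenerate hyperplane case and the correspondence between degree-$(d-1)$ and degree-$d$ normal curves), that it respects the group structure up to the allowed translation, and establishing the intersection bound that lets two curves be identified once they share enough points — all uniformly in $d$, so that the error terms $O(d2^dK)$ survive. The averaging step that selects a good centre $p$ is routine, and the additive-combinatorics endgame is by now standard, but keeping the dependence on $d$ explicit throughout (so the hypothesis $n > C\max\{(dK)^8, d^32^dK\}$ is actually enough) will require care.
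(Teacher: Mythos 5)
Your reduction goes in the opposite direction from the paper's, and this opens a genuine gap. The paper does not invert in a point of $P$: it applies inverse stereographic projection to lift $P$ to $P'=\pi^{-1}(P)\subset\overline{\SS^d}\subset\RR\PP^{d+1}$, which converts \emph{every} ordinary hypersphere of $P$ into an ordinary hyperplane of $P'$ in one stroke, so that Theorem~\ref{thm:hyperplane} is applied in dimension $d+1\ge 4$ with no averaging. Your inversion $\iota_p$ only converts the ordinary hyperspheres \emph{through $p$}; the averaging step to find a good $p$ is fine, but two problems follow. First, for $d=3$ you would be invoking the hyperplane structure theorem in $\RR\PP^3$, where Theorem~\ref{thm:hyperplane} as quoted is not available (it requires $d\ge 4$). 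Second, and more seriously, the degree-$(d+1)$ curve $\delta'$ that the hyperplane theorem hands you in $\RR\PP^d$ comes with no control on its intersection with the imaginary sphere at infinity $\Sigma_\infty$. The curves in the conclusion all have spherical degree $k+1$, so your $\delta'$ would need to be exactly $k$-spherical; if it is not, its inverse under $\iota_p$ has degree as large as $2d+2$ (Corollary~\ref{cor:inversion}) and is not on the list. Nothing in the hyperplane structure theorem forces this sphericality. In the paper it is automatic: the degree-$(d+2)$ curve contains many points of $P'\subset\overline{\SS^d}$, hence lies on $\overline{\SS^d}$ by B\'ezout, and Proposition~\ref{prop:inversion2} then computes the sphericality of its projection exactly. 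That same containment is what excludes the curve cases for odd $d$ (a bounded real curve of odd degree $d+2$ contradicts Lemma~\ref{lem:odd}); in your setup the relevant curve has even degree $d+1$ when $d$ is odd, no contradiction arises, and you have no mechanism to rule out cases \itm{ii} and \itm{iii} in odd dimensions (your parenthetical ``when $d-1$ is even'' has the parity backwards).

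The endgame also does not close as described. The group on $\delta'$ from Theorem~\ref{thm:hyperplane} characterises when $d+1$ points lie on a \emph{hyperplane}; pulled back through $\iota_p$ this only tells you when $d+1$ points of $P\setminus\{p\}$ together with $p$ lie on a hypersphere \emph{through $p$}. The statement requires the $(d+2)$-point hypersphere law of Proposition~\ref{prop:group}, which is again obtained by lifting to $\overline{\SS^d}$ and transferring the hyperplane law from $\RR\PP^{d+1}$; it is not recoverable from a single inverted copy in $\RR\PP^d$. (No Freiman-type argument is needed here at all: the coset structure is imported wholesale from Theorem~\ref{thm:hyperplane}; the additive combinatorics lives entirely in \cite{LS18-2}.) The repair is precisely the paper's route: lift via $\pi^{-1}$, apply the hyperplane theorem one dimension up, and only then use inversion (Corollaries~\ref{cor:inversion} and~\ref{cor:acnodal}) to normalise the resulting spherical curves to the three listed types.
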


The paper \cite{LMMSSZ18} also considered the alternative, more straightforward, definition of ordinary circles as only being proper circles excluding lines, as this was how the ordinary circles problem has been formulated originally \citelist{\cite{BB94}\cite{E67}\cite{Z11}}.
However, this definition turned out to be less natural, as the class of proper circles are not invariant under inversions, and in the general approach of \cite{LMMSSZ18} the more inclusive definition had to be considered first.
Thus in this paper we also include hyperplanes as special degenerate hyperspheres, making it closed under inversions.
We leave open finding variants of Theorems~\ref{thm:extremal} and \ref{thm:extremal2} where hyperplanes are excluded (although it should not be hard to obtain analogous results as in the $2$-dimensional case).

\section{Spherical curves and inversion}\label{sec:curves}
The algebraic geometry that we will need is classical and elementary, and useful introductions can be found in \citelist{\cite{H92}\cite{Hulek}\cite{Reid}\cite{RS85}}, and also \cite{Pedoe} for inversion and stereographic projection.
While our theorems are stated over $\RR^d$, we often need to work in the larger complex projective space $\CC\PP^d$.
We use homogeneous coordinates $[x_0,x_1,\dots,x_d]$ for a point in $\CC\PP^d$, identify the affine part where $x_0\neq 0$ with $\CC^d$, and call the hyperplane defined by $x_0=0$ the \emph{hyperplane at infinity} and denote it by $\Pi_\infty$.

We denote by $\overline{S}$ the Zariski closure of a set $S \subseteq \CC\PP^d$.
For instance, the Zariski closure of the hypersphere $\SS^{d-1}\subset\CC^d$ with equation $x_1^2 + \dotsb + x_d^2 = 1$ is the projective variety $\overline{\SS^{d-1}}$ defined by the homogeneous equation $x_0^2 = x_1^2 + \dotsb + x_d^2$.
We call the intersection $\overline{\SS^{d-1}}\cap \Pi_\infty$ the \emph{imaginary sphere at infinity} and denote it by $\Sigma_\infty$.
This is a $(d-2)$-sphere on $\Pi_\infty$ and is the intersection of $\Pi_\infty$ with the Zariski closure of any hypersphere in $\CC^d$.

We will repeatedly use the following formulation of B\'ezout's theorem.
We say that a variety is \emph{pure-dimensional} if each of its irreducible components has the same dimension.
Two varieties $X$ and $Y$ in $\CC\PP^d$ intersect \emph{properly} if $\dim(X\cap Y)=\dim(X)+\dim(Y)-d$.

\begin{theorem}[B\'ezout, \cite{H92}*{Theorem 18.4}]\label{thm:bezout}
Let $X$ and $Y$ be varieties of pure dimension in $\CC\PP^d$ that intersect properly.
Then the total degree of $X\cap Y$ is equal to $\deg(X)\deg(Y)$, where multiple components are counted with multiplicity.
\end{theorem}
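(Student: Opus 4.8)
Since this is the standard form of B\'ezout's theorem, the realistic plan is to quote it from Harris's book, exactly as the authors do; what follows sketches how one would reprove it from scratch. The backbone is the Hilbert polynomial. For a pure $m$-dimensional subvariety $Z\subseteq\CC\PP^d$, let $P_Z$ be the Hilbert polynomial of a homogeneous coordinate ring; it has degree $m$, and one \emph{defines} $\deg(Z)$ to be $m!$ times its leading coefficient, extending to equidimensional schemes by summing over components weighted by the appropriate multiplicities. In these terms the theorem becomes an assertion about the leading term of $P_{X\cap Y}$.

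The first step is the special case in which $Y=H$ is a hypersurface of degree $e$ not containing any component of $X$. Then $X\cap H$ is cut out of $X$ by one extra equation of degree $e$, giving a short exact sequence $0\to\mathcal{O}_X(-e)\to\mathcal{O}_X\to\mathcal{O}_{X\cap H}\to 0$; taking Euler characteristics after twisting yields $P_{X\cap H}(t)=P_X(t)-P_X(t-e)$, whose leading term is $e\,\frac{\deg(X)}{(m-1)!}\,t^{m-1}$, so $\deg(X\cap H)=e\deg(X)$, while $X\cap H$ is pure of dimension $m-1$ by Krull's principal ideal theorem. Taking $e=1$ repeatedly shows that a proper intersection of $X$ with a linear subspace has degree $\deg(X)$; in particular a generic linear section of complementary dimension consists of exactly $\deg(X)$ reduced points, which is the geometric meaning of the degree.

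For the general case I would use the join construction. Place two disjoint copies of $\CC\PP^d$ as linear subspaces $E_1,E_2\subset\CC\PP^{2d+1}$, let $X'\subset E_1$ and $Y''\subset E_2$ be the corresponding copies of $X$ and $Y$, and let $J=J(X',Y'')$ be the union of all lines meeting both. Then $\dim J=\dim X+\dim Y+1$ and $\deg J=\deg(X)\deg(Y)$: slicing $J$ by a generic hyperplane through $E_1$ amounts, away from the lower-dimensional leftover $X'$, to slicing $Y''$ by a hyperplane, so after $\dim Y$ such slices one is left with $\deg(Y)$ cones over $X'$, each of degree $\deg(X)$ and pairwise meeting only in lower dimension. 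Finally, the linear subspace $L=\{x_i=y_i:0\le i\le d\}\cong\CC\PP^d$ meets $J$ in precisely a linear copy of $X\cap Y$, and when $X$ and $Y$ intersect properly this $J\cap L$ is again a proper intersection; hence by the previous paragraph $\deg(X\cap Y)=\deg(J\cap L)=\deg J=\deg(X)\deg(Y)$.

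The real work, and the main obstacle, is the bookkeeping with multiplicities: one must check that the multiplicity of $X\cap Y$ along a component equals the one produced on the join side by the $d+1$ successive linear slicings, and that at each step the slicing hyperplanes can be chosen generic enough to keep every intersection proper while still cutting out $L$ (a prime-avoidance argument, using that $J$ is equidimensional and $L$ is a fixed linear space of the required codimension). A slicker route, which however presupposes the machinery of the Chow ring, is to pass to $A^\bullet(\CC\PP^d)\cong\ZZ[h]/(h^{d+1})$, in which $[Z]=\deg(Z)\,h^{\operatorname{codim}Z}$ for every subvariety $Z$ and proper intersection is exactly the condition under which $[X\cap Y]=[X]\cdot[Y]$; comparing the two sides of $\deg(X)h^{\operatorname{codim}X}\cdot\deg(Y)h^{\operatorname{codim}Y}=\deg(X\cap Y)\,h^{\operatorname{codim}X+\operatorname{codim}Y}$ then gives the statement immediately.
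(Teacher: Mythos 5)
The paper gives no proof of this statement at all --- it is quoted verbatim as Theorem 18.4 of Harris's book --- and your stated plan of citing it is exactly what the authors do. Your supplementary sketch (degree via Hilbert polynomials, the hypersurface case from the twisted exact sequence, then the join construction in $\CC\PP^{2d+1}$ intersected with the diagonal linear space) is precisely Harris's own argument, and you correctly identify the multiplicity bookkeeping as the only part requiring real work, so there is nothing to object to.
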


We define a \emph{curve} to be a pure $1$-dimensional variety in $\CC\PP^d$ that is not necessarily irreducible, but we assume that there are no multiple components (that is, the curve is \emph{reduced}).
We say that a curve is \emph{non-degenerate} if it is not contained in a hyperplane,
and \emph{real} if each of its irreducible components contains infinitely many points of $\RR\PP^d$.
Whenever we consider a curve in $\RR\PP^d$, we implicitly assume that its Zariski closure is a real curve.

We note the following simple result for later use.

\begin{lemma}\label{lem:realideal}
The homogeneous ideal of a real curve is generated by real polynomials.
\end{lemma}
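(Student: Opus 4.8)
The plan is to show that if $C \subseteq \RR\PP^d$ is a curve whose Zariski closure $\overline{C}$ is a real curve, then the homogeneous ideal $I(\overline{C}) \subseteq \CC[x_0,\dots,x_d]$ has a generating set consisting of polynomials with real coefficients. The key observation is that complex conjugation acts on everything in sight: it is a ring automorphism $\sigma$ of $\CC[x_0,\dots,x_d]$ fixing $\RR[x_0,\dots,x_d]$ pointwise, it permutes the points of $\CC\PP^d$ fixing $\RR\PP^d$ pointwise, and crucially it fixes $\overline{C}$ as a set. The last point is where realness is used: each irreducible component $V$ of $\overline{C}$ contains infinitely many real points, and those real points are fixed by $\sigma$; since $\sigma(V)$ is again an irreducible variety of the same dimension containing those infinitely many points, and a $1$-dimensional irreducible variety is determined by infinitely many of its points (two distinct irreducible curves meet in finitely many points), we get $\sigma(V) = V$. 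Hence $\sigma(\overline{C}) = \overline{C}$, and therefore $\sigma$ fixes the ideal $I(\overline{C})$ setwise.

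With that in hand, I would argue as follows. Let $I = I(\overline{C})$, which is a homogeneous ideal stable under $\sigma$. For any $f \in I$, write $f = g + ih$ with $g = \tfrac{1}{2}(f + \sigma(f))$ and $h = \tfrac{1}{2i}(f - \sigma(f))$; both $g$ and $h$ have real coefficients, and both lie in $I$ because $I$ is stable under $\sigma$ and closed under addition and scalar multiplication. Thus $I$ is spanned over $\CC$ by its real elements $I \cap \RR[x_0,\dots,x_d]$. In particular, if $\{f_1,\dots,f_m\}$ is any finite generating set of $I$ (which exists by the Hilbert basis theorem, or one may take all of $I$), then replacing each $f_j$ by its real and imaginary parts produces a generating set for $I$ consisting of real polynomials. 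One should double-check the homogeneity bookkeeping: $\sigma$ preserves degree, so if $f$ is homogeneous of degree $e$ then $g$ and $h$ are too, and the real generators can be taken homogeneous, which is what is wanted for a homogeneous ideal.

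The only genuine content, and the step I would flag as the main point requiring care, is the claim that a real irreducible curve is conjugation-invariant, i.e. that an irreducible variety with infinitely many real points cannot be sent to a \emph{different} irreducible variety by conjugation. This is clear in dimension one via B\'ezout (Theorem~\ref{thm:bezout}): if $\sigma(V) \neq V$ then $V \cap \sigma(V)$ is a proper closed subset of the irreducible curve $V$, hence finite, yet it contains all the infinitely many real points of $V$ — a contradiction. (The same argument in fact works in any dimension once one knows the real points are Zariski dense in $V$, but for curves the stated definition of "real" gives exactly the density statement needed.) Everything else is formal manipulation with the conjugation automorphism, so no lengthy computation is involved.
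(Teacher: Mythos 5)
Your proof is correct, but it takes a genuinely different route from the paper's. You use a Galois-descent argument: conjugation $\sigma$ stabilises each irreducible component (since a component and its conjugate share infinitely many real points but two distinct irreducible curves meet in only finitely many points), hence stabilises the ideal $I$, and then splitting any generator into its real and imaginary parts --- both of which remain in $I$ --- yields real generators. The paper instead works degree by degree: it fixes $e\deg(\delta)+1$ real points on the (irreducible) curve and uses B\'ezout's theorem to show that a degree-$e$ form lies in $I^{(e)}$ if and only if it vanishes at those points, so that $I^{(e)}$ is the solution space of a linear system with real coefficients and therefore admits a real basis. Both arguments hinge on the same fact --- that the real points are Zariski dense in each component --- but deploy it differently. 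Your version is cleaner on two counts: it handles reducible curves directly (the paper's ``without loss of generality irreducible'' reduction is slightly glossed, since one must still argue that the ideal of a union inherits real generators), and, as you note, it generalises verbatim to higher-dimensional varieties once density of real points is known. The paper's version has the minor virtue of being entirely explicit and of producing a real basis of each graded piece, which is in the spirit of how the lemma is then applied in Lemma~\ref{lem:odd}. Your homogeneity bookkeeping and the passage from ``spanned by real elements'' to ``generated by real polynomials'' via the Hilbert basis theorem are both fine.
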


\begin{proof}
Without loss of generality, the real curve $\delta\subset\CC\PP^d$ is irreducible.
Let $I$ be the homogeneous ideal of $\delta$, and consider $I = \bigoplus_e I^{(e)}$, where $I^{(e)}$ is the set of polynomials of $I$ of degree $e$.
We show that each $I^{(e)}$ can be generated by real polynomials, whence so can $I$.
A polynomial is an element of $I^{(e)}$ if and only if the hypersurface it defines contains $\delta$, which occurs if and only if the hypersurface contains more than $e\deg(\delta)$ points of $\delta$ by Theorem~\ref{thm:bezout}.
Since $\delta$ is real and contains infinitely many real points, the coefficients of each polynomial in $I^{(e)}$ satisfy a linear system of (at least) $e\deg(\delta)+1$ real equations in $\binom{d+e}{d}$ variables.
Solving this linear system then shows that $I^{(e)}$, considered as a vector space, has a basis of real polynomials.
\end{proof}

As a consequence, we obtain the following basic fact on odd-degree curves in real projective space.

\begin{lemma}\label{lem:odd}
Let $\delta$ be a non-degenerate curve of odd degree in $\RR\PP^d$.
Then any hyperplane of $\RR\PP^d$ intersects $\delta$ in at least one point of $\RR\PP^d$.
\end{lemma}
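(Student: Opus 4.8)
The plan is to combine B\'ezout's theorem with a parity argument coming from complex conjugation. First I would reduce to the case where no irreducible component of the complex curve $\overline{\delta}$ is contained in the given hyperplane $\Pi$: if some component $\delta_0\subseteq\Pi$, then, being a component of a real curve, $\delta_0$ contains infinitely many points of $\RR\PP^d$, all of which lie on $\Pi$, and we are done immediately.

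So assume no component of $\overline{\delta}$ lies in $\Pi$. Then each $1$-dimensional component of $\overline{\delta}$ meets the hyperplane $\Pi$ in a variety of dimension exactly $1+(d-1)-d=0$ (non-empty, since a positive-dimensional projective variety meets every hyperplane), so $\overline{\delta}$ and $\Pi$ intersect properly. By Theorem~\ref{thm:bezout}, the total degree of $\overline{\delta}\cap\Pi$, counted with multiplicity, equals $\deg(\overline{\delta})\cdot\deg(\Pi)=\deg(\delta)$, which is odd by hypothesis.

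Next I would bring in complex conjugation $\sigma\colon\CC\PP^d\to\CC\PP^d$. The hyperplane $\Pi$, being a hyperplane of $\RR\PP^d$, is cut out by a real linear form, and by Lemma~\ref{lem:realideal} the homogeneous ideal of $\overline{\delta}$ is generated by real polynomials; hence the intersection scheme $\overline{\delta}\cap\Pi$ is defined over $\RR$, is $\sigma$-invariant, and $\sigma$ preserves intersection multiplicities (it is an isomorphism between the relevant local rings at $p$ and $\sigma(p)$). Since the $\sigma$-fixed points of $\CC\PP^d$ are exactly the points of $\RR\PP^d$, the points of $\overline{\delta}\cap\Pi$ not in $\RR\PP^d$ split into conjugate pairs $\{p,\sigma(p)\}$ with equal multiplicity and thus contribute an even number to the total degree. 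As the total degree is odd, the multiplicities of the points of $\overline{\delta}\cap\Pi$ lying in $\RR\PP^d$ sum to an odd, hence nonzero, number; in particular $\overline{\delta}\cap\Pi$ contains a point of $\RR\PP^d$, which is a point of $\delta$ on $\Pi$.

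The argument is short once the pieces are assembled; the only point requiring a little care is the claim that $\sigma$ acts on the intersection scheme preserving multiplicities, which is exactly where Lemma~\ref{lem:realideal} (that $\overline{\delta}$ is cut out by real equations) enters, together with the small reduction handling components of $\overline{\delta}$ possibly contained in $\Pi$.
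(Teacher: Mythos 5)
Your proof is correct and follows essentially the same route as the paper's: invoke Lemma~\ref{lem:realideal} to see that $\overline{\delta}$ is cut out by real equations, then use the odd total intersection degree together with the fact that non-real intersection points come in conjugate pairs of equal multiplicity to force a real point. You simply make explicit two details the paper leaves implicit --- the appeal to B\'ezout for the total degree and the reduction handling components of $\overline{\delta}$ contained in $\Pi$ --- both of which are welcome but do not change the argument.
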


\begin{proof}
By Lemma~\ref{lem:realideal}, the homogeneous ideal of $\delta$ is generated by real polynomials.
The lemma then follows from the fact that roots of real polynomials come in complex conjugate pairs.
Since $\delta$ has odd degree, any real hyperplane thus intersects $\delta$ in at least one real point.
\end{proof}

\begin{definition}
An \emph{$\l$-spherical curve} in $\RR^d$ is a real curve in $\CC\PP^d$ that contains exactly $\l$ pairs of complex conjugate points, counted with multiplicity, on $\Sigma_\infty$.

The \emph{spherical degree} of an $\l$-spherical curve of degree $e$ is $e - \l$.
\end{definition}

Note that since lines are $0$-spherical and circles $1$-spherical, both lines and circles have spherical degree $1$.
When $d=2$, these definitions coincide with the classical definitions of $\l$-circular curve and circular degree respectively~\cites{J77, LMMSSZ18}.

An important property of the spherical degree is that it is invariant under inversion, defined next.
We first introduce stereographic projection to be the map
\[ \pi: \CC\PP^{d+1} \supset \overline{\SS^d} \setminus{\{N\}} \rightarrow \{x_{d+1} = 0\} = \CC\PP^d, \]
where $N = [1, 0, \dotsc, 0, 1]$ is the \emph{north pole} of $\overline{\SS^d}$, and $q \in \overline{\SS^d} \setminus{\{N\}}$ is mapped to the intersection point of the line $Nq$ and the hyperplane $\{x_{d+1} = 0\}$, which we identify with $\CC\PP^d$.
Denote the tangent hyperplane to $\overline{\SS^d}$ at $N$ by $\Pi_N$.
Note that its homogeneous equation is $x_{d+1}=x_0$.

It is not difficult to see that $\Pi_N$ intersects $\overline{\SS^d}$ in the cone over $\Sigma_\infty$ with vertex~$N$.
For example, when $d=2$, the intersection of the tangent plane $\Pi_N$, which has equation $x_3=x_0$, with the sphere $x_0^2=x_1^2+x_2^2+x_3^2$ is the union of the two lines $x_1\pm ix_2 =0$, which is the cone over the two circular points at infinity $[0,1,\pm i,0]$ in $\Pi_N$.
Note that these two points form the $0$-dimensional imaginary sphere at infinity.

In particular, $\pi$ maps $\Pi_N \cap \overline{\SS^d} \setminus{\{N\}}$ to $\Sigma_\infty$.
The range of $\pi$ is thus $\{x_0 \ne 0\} \cup \Sigma_\infty = \CC^d \cup \Sigma_\infty$.
Also, $\pi$ is injective on $\overline{\SS^d}\setminus\Pi_N$, and for each $y\in\Sigma_\infty$, $\pi^{-1}(y)=\ell\setminus\{N\}$, where $\ell$ is the tangent line to $\overline{\SS^d}$ at $N$ through~$y$.

Inversion in the origin $o\in\CC^d$ is defined to be the bijective map
\[ \iota_o = \pi \circ \rho \circ \pi^{-1}: \CC^d \setminus{\{o\}} \rightarrow \CC^d \setminus{\{o\}}, \]
where $\rho$ 
is the reflection 
in the hyperplane $\{x_{d+1} = 0\}$.
Inversion in an arbitrary point $r \in \CC^d$ is then defined to be the bijective map
\[ \iota_r = \tau_r \circ\iota_o \circ \tau_{-r}: \CC^d \setminus{\{r\}} \rightarrow \CC^d \setminus{\{r\}}, \]
where $\tau_r(x) = x+r$ is the translation map taking the origin to $r$.

As is well known in real space, if $V$ is a hypersphere or a hyperplane, then the inverse $\overline{\iota_r(V\setminus\{r\})}$ is again a hypersphere or a hyperplane, depending on whether $r\notin V$ or $r\in V$ respectively.
It is also easily seen that the inverse of a circle or a line is again a circle or a line.
We next show more generally that spherical degree is preserved by inversion.

\begin{prop}\label{prop:inversion1}
Let $\delta\subset\RR\PP^d$ be a real curve of spherical degree $k$.
Then $\delta' := \overline{\pi^{-1}(\overline{\delta}\setminus\Sigma_\infty)}$ is a real curve of degree $2k$ contained in $\SS^d \subset \CC\PP^{d+1}$ that intersects $\Pi_N$ in finitely many points.
\end{prop}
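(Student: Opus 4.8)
The plan is to track degree and sphericality through the two geometric maps composing stereographic projection. First I would recall the setup: $\delta\subset\CC\PP^d$ is a real curve of degree $e$ with exactly $\ell$ conjugate pairs on $\Sigma_\infty$, so its spherical degree is $k=e-\ell$ and in particular $e+\ell=2k+2\ell$. We want to understand $\delta'=\overline{\pi^{-1}(\overline\delta\setminus\Sigma_\infty)}$. The key observation is that $\pi^{-1}$ is (the restriction of) a projection from a point: concretely, $\pi$ is projection of $\overline{\SS^d}$ from the north pole $N$ onto the hyperplane $\{x_{d+1}=0\}$, so $\pi^{-1}$ sends a point $p$ in that hyperplane to the \emph{second} intersection point of the line $Np$ with $\overline{\SS^d}$. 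Thus $\delta'$ is the Zariski closure in $\overline{\SS^d}$ of the image of $\delta$ under this rational inverse, and in particular $\delta'$ lies on $\overline{\SS^d}$ by construction. It is real because $\delta$ is real and $\pi,\pi^{-1}$ are defined by real equations, so each component of $\delta'$ carries infinitely many real points.

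For the degree, I would argue as follows. Let $\widehat\delta$ denote the cone over $\delta$ with vertex $N$, i.e.\ the union of all lines $Nq$ with $q\in\overline\delta$; this is a surface of degree $e=\deg\delta$ (coning from a point not on the curve preserves degree). Then $\delta'\subseteq\widehat\delta\cap\overline{\SS^d}$, and since $\deg\overline{\SS^d}=2$, B\'ezout (Theorem~\ref{thm:bezout}) gives that $\widehat\delta\cap\overline{\SS^d}$ has total degree $2e$, provided the intersection is proper (which it is: it is $1$-dimensional, as a surface meeting a quadric hypersurface in $\CC\PP^{d+1}$). Now the cone $\widehat\delta$ meets $\overline{\SS^d}$ in two pieces: one line $Nq$ for each point $q$ of $\overline\delta$ recovers a single point of $\overline{\SS^d}$ besides $N$ (contributing $\delta'$), except that for each point of $\overline\delta\cap\Sigma_\infty$ the corresponding line is tangent to $\overline{\SS^d}$ at $N$ (since $\pi$ maps $\Pi_N\cap\overline{\SS^d}\setminus\{N\}$ onto $\Sigma_\infty$), so those lines lie on $\widehat\delta\cap\overline{\SS^d}$ entirely through $N$. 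Counting: the lines over the $2\ell$ points of $\overline\delta\cap\Sigma_\infty$ together with the multiplicity of $N$ on $\widehat\delta\cap\overline{\SS^d}$ account for degree $e+\ell$ of the total $2e$ — more carefully, the component of $\widehat\delta\cap\overline{\SS^d}$ through $N$ (the cone over $\overline\delta\cap\Sigma_\infty$, plus the contribution of $N$'s local intersection multiplicity) has degree $e+\ell$, leaving $\deg\delta'=2e-(e+\ell)=e-\ell=2k$. (The bookkeeping is cleanest done projectively: restrict the linear projection $\pi$ itself to $\delta'$ and note it is generically $1$-to-$1$ onto $\overline\delta$, so $\deg\delta'$ minus the degree of the ``fibre at $N$'' equals $\deg\delta=e$; since the fibre at $N$ has length $\ell$ coming from $\overline\delta\cap\Sigma_\infty$, we again get $\deg\delta'=e+\ell$... — here I would pin down the correct normalization by testing on a line and a circle, where the answer must be $\deg\delta'=2\cdot 1=2$, i.e.\ $2k$ with $k=1$.)

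For the final claim, that $\delta'$ meets $\Pi_N$ in finitely many points: $\Pi_N$ has equation $x_{d+1}=x_0$, and $\Pi_N\cap\overline{\SS^d}$ is the cone over $\Sigma_\infty$ with vertex $N$. If $\delta'$ had infinitely many points on $\Pi_N$, then since $\delta'$ is $1$-dimensional, some irreducible component of $\delta'$ would lie entirely in $\Pi_N\cap\overline{\SS^d}$, hence in the cone over $\Sigma_\infty$; pushing that component forward by $\pi$ would then give a component of $\overline\delta$ contained in $\pi(\Pi_N\cap\overline{\SS^d}\setminus\{N\})=\Sigma_\infty$, i.e.\ a component of $\overline\delta$ lying on the hyperplane $\Pi_\infty$. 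But every component of $\overline\delta$ is real and contains infinitely many real \emph{affine} points (this is part of what it means for $\delta\subset\RR\PP^d$), so no component lies in $\Pi_\infty$. Hence $\delta'\cap\Pi_N$ is finite.

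\textbf{Main obstacle.} The genuinely delicate step is the degree count: correctly attributing the ``missing'' degree $e+\ell=2e-2k$ to the part of $\widehat\delta\cap\overline{\SS^d}$ concentrated at $N$ and along the tangent lines over $\Sigma_\infty$, rather than to $\delta'$. This requires care with intersection multiplicities at $N$ (the local intersection of the surface $\widehat\delta$ with the quadric $\overline{\SS^d}$ at the vertex $N$) and with whether components of $\widehat\delta\cap\overline{\SS^d}$ might pass through $N$ for reasons unrelated to $\Sigma_\infty$. I would sidestep most of this by working with the linear projection directly: $\delta'$ projects from $N$ finitely-to-one onto its image, which is generically $\overline\delta$, and a point $q\in\overline\delta$ has a \emph{single} preimage on $\overline{\SS^d}\setminus\{N\}$ unless the line $Nq$ is tangent to $\overline{\SS^d}$ (equivalently $q\in\Sigma_\infty$), so $\deg\delta'=\deg\overline\delta+\#(\overline\delta\cap\Sigma_\infty)=e+\ell$ with multiplicity — wait, that gives $e+\ell$, not $2k$; so the correct statement must be that $\pi\colon\delta'\to\overline\delta$ has degree $2$ generically? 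No — reconciling this with the line/circle test cases is exactly where I must be careful, and getting the normalization right (likely: $\delta'$ is a double cover branched appropriately, or $\pi|_{\delta'}$ is birational and the degree comes out as $2e-(e+\ell)$ from the quadric) is the crux of the argument. The safest route is to compute $\deg\delta'$ as the number of intersection points of $\delta'$ with a generic hyperplane of $\CC\PP^{d+1}$, use that a generic hyperplane through $N$ cuts $\overline{\SS^d}$ in a quadric $(d-1)$-fold and pulls back under $\pi$ to a generic hyperplane of $\CC\PP^d$ plus $\Sigma_\infty$ with appropriate multiplicities, and count $e$ (from the generic hyperplane meeting $\overline\delta$) plus a contribution of $\ell$ — then double-check against $k=1$.
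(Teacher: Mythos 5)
Your framework is the right one and matches the paper's: form the cone $\widehat\delta$ over $\overline\delta$ with vertex $N$, intersect it with $\overline{\SS^d}$, and apply B\'ezout. But the degree count --- which you yourself identify as the crux --- is never carried out correctly, and the one concrete count you do write down is wrong: you assign degree $e+\ell$ to the exceptional part and conclude $\deg\delta'=2e-(e+\ell)=e-\ell$, which you then label as $2k$; in fact $e-\ell=k$, not $2k$, and your closing paragraph shows you noticed the inconsistency without resolving it. The correct bookkeeping is much simpler than what you attempt. The cycle $\widehat\delta\cap\overline{\SS^d}$ has total degree $2e$ by B\'ezout (the cone has degree $e$, the quadric degree $2$). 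Its one-dimensional components are the components of $\delta'$ together with the lines $Nx$ for each of the $2\ell$ points $x\in\overline\delta\cap\Sigma_\infty$ (counted with multiplicity); these lines lie entirely on $\overline{\SS^d}$ because $\Pi_N\cap\overline{\SS^d}$ is exactly the cone over $\Sigma_\infty$ with vertex $N$. Each such line has degree $1$, so together they account for degree $2\ell$, leaving $\deg\delta'=2e-2\ell=2k$. There is no separate ``local intersection multiplicity at $N$'' to subtract: B\'ezout for a proper intersection counts one-dimensional components with multiplicity, and the point $N$ is not a component --- it merely lies on the lines (and possibly on $\delta'$). Likewise your worry that $\pi|_{\delta'}$ might be generically $2$-to-$1$ is unfounded: a generic line through $N$ meets the quadric $\overline{\SS^d}$ in exactly one further point, so $\pi^{-1}$ is single-valued off $\Sigma_\infty$ and $\pi|_{\delta'}$ is birational onto $\overline\delta$. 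Your sanity check with a line ($e=1$, $\ell=0$) already refutes your $e+\ell$ count, since a line lifts to a circle through $N$, of degree $2$, not $1$.

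The remaining two claims are handled adequately: realness follows because $\overline\delta\cap\Sigma_\infty$ is finite and $\pi^{-1}$ is defined over $\RR$, so each component of $\delta'$ retains infinitely many real points; and your argument that a component of $\delta'$ contained in $\Pi_N$ would push forward into $\Sigma_\infty\subset\Pi_\infty$, contradicting the realness of $\delta$, is a correct (slightly more explicit) version of the paper's one-line observation that $\delta'$ consists by construction of the components of $\pi^{-1}(\overline\delta)$ not contained in $\Pi_N$. But as written the proposal does not establish the degree statement, which is the substance of the proposition.
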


\begin{proof}
Let $\delta$ be $\l$-spherical of degree $e$, where $k=e-\l$.
Then the intersection of the cone over $\overline{\delta} \subset \CC\PP^d \subset \CC\PP^{d+1}$ with vertex $N$ and $\overline{\SS^d}$ is exactly the union of the curve $\delta'$ and the lines $Nx$ for each $x \in \overline{\delta} \cap \Sigma_\infty$.
By Theorem~\ref{thm:bezout}, the intersection has total degree $2e$, hence $\deg(\delta') = 2e - 2\l = 2k$.
Since $\overline{\delta}$ intersects $\Sigma_\infty$ in only finitely many points and $\pi^{-1}$ takes real points to real points, it follows that $\delta'$ is real.
Also, since $\delta'$ consists of all irreducible components of $\pi^{-1}(\overline{\delta})$ not contained in $\Pi_N$, $\delta'$ intersects $\Pi_N$ in finitely many points.
\end{proof}

\begin{prop}\label{prop:inversion2}
Let $\delta'$ be a real curve of degree $2k$ contained in $\overline{\SS^d} \subset \CC\PP^{d+1}$.
If $\delta'$ intersects $\Pi_N$ in finitely many points,
then $\delta := \overline{\pi \circ \rho(\delta')}$ is a $(k-m)$-spherical curve of degree $2k-m$, where $m \ge 0$ is the multiplicity of $\rho^{-1}(N)$ on $\delta'$.
In particular, the spherical degree of $\delta$ is $k$.
\end{prop}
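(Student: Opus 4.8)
The plan is to reverse the cone-and-B\'ezout argument from the proof of Proposition~\ref{prop:inversion1}. A useful first observation is that the map $\pi\circ\rho$, restricted to $\overline{\SS^d}$, is just the stereographic projection from the point $S:=\rho^{-1}(N)$ (the ``south pole''): since $\rho$ maps the line $Sq$ onto the line $N\rho(q)$ while fixing every point of $\{x_{d+1}=0\}$, the points $Sq\cap\{x_{d+1}=0\}$ and $N\rho(q)\cap\{x_{d+1}=0\}$ coincide, so $\pi(\rho(q))$ is the image of $q$ under projection from $S$. Setting $\gamma':=\rho(\delta')$, we thus have $\delta=\overline{\pi(\gamma')}$, where $\gamma'$ is a real curve of degree $2k$ in $\overline{\SS^d}$ (as $\rho$ is a projective automorphism of $\CC\PP^{d+1}$ preserving $\overline{\SS^d}$) with multiplicity exactly $m$ at $N$ (as $\rho$ carries $S$ to $N$). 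It therefore suffices to analyse $\overline{\pi(\gamma')}$ for such $\gamma'$; the hypothesis that $\delta'$, hence $\gamma'$, meets $\Pi_N$ in finitely many points is the non-degeneracy assumption inherited from Proposition~\ref{prop:inversion1}.

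I would first check that $\pi$ restricted to $\gamma'$ is birational onto its image. On $\CC\PP^{d+1}$ the map $\pi$ is the linear projection from $N$ onto $\{x_{d+1}=0\}$, and as a rational map $\overline{\SS^d}\dashrightarrow\CC\PP^d$ it is birational, since a generic line through $N$ meets the quadric $\overline{\SS^d}$ at $N$ and at exactly one further point; it is non-injective only on $\Pi_N\cap\overline{\SS^d}$, the cone over $\Sigma_\infty$ with vertex $N$, which it collapses onto $\Sigma_\infty$. As that cone contains $N$ as its only real point, no irreducible component of the real curve $\gamma'$ can lie in it; hence $\pi|_{\gamma'}$ is generically injective, so birational onto $\delta:=\overline{\pi(\gamma')}$. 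Consequently $\delta$ is a reduced, pure one-dimensional curve, it is real since $\pi$ is defined over $\RR$, and it is not contained in $\Sigma_\infty$ (which has no real points).

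Birationality gives the degree at once, via the standard formula for the image of a curve under a linear projection centred at a point of the curve: $\deg\delta=\deg\gamma'-m=2k-m$. For the spherical degree, let $\ell$ be the number of points of $\overline\delta$ on $\Sigma_\infty$, counted with multiplicity, so $\delta$ is $\ell$-spherical of spherical degree $(2k-m)-\ell$; I claim $\ell=k-m$. Consider the cone $\mathcal C$ over $\overline\delta\subset\CC\PP^d\subset\CC\PP^{d+1}$ with vertex $N$, a surface of degree $2k-m$. Since $\overline\delta$ is not contained in $\Sigma_\infty$, the intersection $\mathcal C\cap\overline{\SS^d}$ is proper, so by Theorem~\ref{thm:bezout} it has total degree $2(2k-m)$. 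Exactly as in the proof of Proposition~\ref{prop:inversion1}, this intersection consists of $\gamma'$ with multiplicity $1$ (being the closure of the $\pi$-preimage of $\overline\delta\setminus\Sigma_\infty$, with $\pi$ birational) together with the lines $Nz$ for $z\in\overline\delta\cap\Sigma_\infty$, each with multiplicity $2$, and nothing else. It follows that $2(2k-m)=2k+2\ell$, whence $\ell=k-m$ and the spherical degree equals $(2k-m)-(k-m)=k$. Undoing the substitution $\gamma'=\rho(\delta')$ and recalling $\iota_o=\pi\circ\rho\circ\pi^{-1}$ then shows that inversion preserves spherical degree, which is the final assertion.

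The main obstacle is the multiplicity bookkeeping in the third paragraph: that in $\mathcal C\cap\overline{\SS^d}$ the curve $\gamma'$ occurs with multiplicity exactly $1$ and each ruling $Nz$ with multiplicity exactly $2$, with no further components — the same computation as in Proposition~\ref{prop:inversion1}, but now needing care near the singular point $N$ of $\gamma'$ when $m\ge1$. A secondary point is that, once $\pi|_{\gamma'}$ is birational, $\delta$ is automatically a curve, so $\deg\delta=2k-m\ge1$; the B\'ezout count then forces $m\le k$, so the outcome $\ell=k-m$ is non-negative, as it must be.
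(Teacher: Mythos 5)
Your strategy is viable and genuinely different from the paper's. You reduce $\pi\circ\rho$ to projection from the south pole, transport the multiplicity $m$ to $N$, and then run the cone-plus-B\'ezout computation of Proposition~\ref{prop:inversion1} in reverse: intersect the cone over $\overline{\delta}$ with $\overline{\SS^d}$ and solve for the number of points at infinity. The paper instead slices with a generic hyperplane $\Pi\subset\CC\PP^{d+1}$: it gets $2k$ affine points of $\delta$ on the hypersphere $\pi(\Pi\cap\overline{\SS^d})$, compares with the B\'ezout count $2(2k-m)$ for $\delta$ against that hypersphere, and attributes the excess $2(k-m)$ to $\Sigma_\infty$. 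Your version buys a tighter symmetry with Proposition~\ref{prop:inversion1} (and your opening observation that $\pi\circ\rho|_{\overline{\SS^d}}$ is projection from $\rho^{-1}(N)$ is a clean way to handle the reflection); the paper's version avoids having to re-analyse the cone near the now-singular point $N$ of $\gamma'$.

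However, the multiplicity bookkeeping in your third paragraph --- which you yourself flag as the main obstacle --- is wrong as stated. The rulings $Nz$ occur in $\mathcal C\cap\overline{\SS^d}$ with multiplicity equal to the multiplicity of $z$ on $\overline{\delta}$, i.e.\ multiplicity $1$ for a simple point, not $2$. (Check $d=2$, $\gamma'$ a conic on $\overline{\SS^2}$ avoiding $N$: the cone over the image circle is a quadric, the intersection has total degree $4 = 2 + 1 + 1$, one ruling of multiplicity one for each of the two circular points; this is also exactly the count the paper uses in Proposition~\ref{prop:inversion1} to get $\deg(\delta')=2e-2\l$.) The factor of $2$ you need comes instead from the fact that, $\delta$ being real, the points of $\overline{\delta}\cap\Sigma_\infty$ come in $\l$ complex-conjugate pairs, so there are $2\l$ rulings. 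Note also that the paper's definition makes an $\l$-spherical curve one with $\l$ \emph{pairs} on $\Sigma_\infty$, whereas you set $\l$ equal to the number of \emph{points}; combining that convention with ``multiplicity $2$ per ruling'' and your equation $2(2k-m)=2k+2\l$ would yield only $(k-m)/2$ conjugate pairs, i.e.\ a $\tfrac{k-m}{2}$-spherical curve, contradicting the proposition. Your displayed equation and final answer are correct, but only because two factor-of-two slips cancel; the key step needs to be rewritten as: $2(2k-m) = \deg\gamma' + (\text{number of points of } \overline{\delta}\cap\Sigma_\infty \text{ with multiplicity}) = 2k + 2\l$ with $\l$ counting conjugate pairs.
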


\begin{proof}
Since $\pi$ is one-to-one on $\overline{\SS^d} \setminus\Pi_N$, we have $\deg(\delta) = 2k - m$.
Let $\Pi$ be a generic hyperplane in $\CC\PP^{d+1}$.
Then 
$|\delta' \cap \Pi| = 2k$.
Since $\delta'$ intersects $\Pi_N$ in finitely many points, without loss of generality, $\delta' \cap \Pi$ is disjoint from $\Pi_N$.
    Then the hypersphere $\pi(\Pi\cap\overline{\SS^d})$ intersects $\delta$ in $2k$ distinct points in $\CC^d$.
However, $|\delta \cap \pi(\Pi\cap\overline{\SS^d})| = 2(2k-m)$ by Theorem~\ref{thm:bezout}.
So $|\delta \cap \Sigma_\infty| = 2(2k-m) - 2k = 2(k-m)$, and these points must come in complex conjugate pairs as $\delta$ is real.
This means that $\delta$ is $(k-m)$-spherical, hence its spherical degree is $(2k-m) - (k-m) = k$ as claimed.
\end{proof}

We obtain the following two corollaries almost immediately.
They generalise the planar results of \cite{J77}; see also \cite{LMMSSZ18}*{Lemma~2.5}.

\begin{corollary}\label{cor:inversion}
Let $\delta$ be a real curve of spherical degree $k+1$.
The inverse of $\delta$ in a point not on $\delta$ is a $(k+1)$-spherical curve of degree $2k+2$;
the inverse of $\delta$ in a smooth point on $\delta$ is a $k$-spherical curve of degree $2k+1$;
and the inverse of $\delta$ in a double point on $\delta$ is a $(k-1)$-spherical curve of degree $2k$.
\end{corollary}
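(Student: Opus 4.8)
The plan is to exhibit inversion as the composition $\pi\circ\rho\circ\pi^{-1}$, up to a translation, and then to chain Propositions~\ref{prop:inversion1} and~\ref{prop:inversion2}, reading off the three cases from the multiplicity parameter that appears in Proposition~\ref{prop:inversion2}. First I would reduce to inversion in the origin: a translation $\tau_r$ extends to a projective automorphism of $\CC\PP^d$ that fixes $\Pi_\infty$ pointwise, hence fixes $\Sigma_\infty$, and so preserves the degree, the spherical degree, and the multiplicity of every point of a curve. It therefore suffices to prove the statement for $\iota_o=\pi\circ\rho\circ\pi^{-1}$, with the centre of inversion $r$ replaced by $o$ (and ``$r$ lies on $\delta$ as a smooth point / double point'' replaced by the corresponding statement about $o$).

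Next I would run the two propositions in sequence. Starting from a real curve $\delta$ of spherical degree $k+1$, Proposition~\ref{prop:inversion1} produces $\delta':=\overline{\pi^{-1}(\overline{\delta}\setminus\Sigma_\infty)}$, a real curve of degree $2(k+1)=2k+2$ contained in $\overline{\SS^d}\subset\CC\PP^{d+1}$ that meets $\Pi_N$ in finitely many points. Feeding $\delta'$ into Proposition~\ref{prop:inversion2} (the ``$2k$'' there now being $2k+2$) shows that $\overline{\pi\circ\rho(\delta')}$ is a $(k+1-m)$-spherical curve of degree $2k+2-m$, where $m\ge 0$ is the multiplicity of the south pole $S:=\rho^{-1}(N)$ on $\delta'$. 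Finally, $\iota_o(\delta\setminus\{o\})$ is a Zariski-dense subset of $\pi\circ\rho(\delta')$, so $\overline{\pi\circ\rho(\delta')}$ is precisely the inverse $\overline{\iota_o(\delta\setminus\{o\})}$ of $\delta$ in $o$. Thus only the value of $m$ is left to determine.

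The crux is that $m$ equals the multiplicity of $o$ on $\delta$. Indeed, $S=[1,0,\dots,0,-1]$ does not lie on $\Pi_N$ (whose equation is $x_{d+1}=x_0$); stereographic projection $\pi$ restricts to an isomorphism from $\overline{\SS^d}\setminus\Pi_N$ onto $\CC^d$ sending $S$ to $\pi(S)=o$; and the components of $\pi^{-1}(\overline{\delta})$ that are not part of $\delta'$ are the lines $Nx$ with $x\in\overline{\delta}\cap\Sigma_\infty$, all of which lie in $\Pi_N$ and hence avoid $S$. Consequently the multiplicity of $S$ on $\delta'$ equals the multiplicity of $o$ on $\overline{\delta}$. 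Plugging $m=0$ when $o\notin\delta$, $m=1$ when $o$ is a smooth point of $\delta$, and $m=2$ when $o$ is a double point of $\delta$ yields, respectively, a $(k+1)$-spherical curve of degree $2k+2$, a $k$-spherical curve of degree $2k+1$, and a $(k-1)$-spherical curve of degree $2k$, which are the three claimed statements.

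I expect the last step --- identifying $m$ --- to be the only real obstacle. One has to check carefully that $\pi$ is a genuine local isomorphism near the south pole (so that it transports the multiplicity of the centre of inversion to the multiplicity of $S$) and that the ``spurious'' lifted components coming from $\overline{\delta}\cap\Sigma_\infty$ are genuinely confined to $\Pi_N$ and therefore cannot inflate the multiplicity at $S$. Everything else is routine bookkeeping with Propositions~\ref{prop:inversion1} and~\ref{prop:inversion2} and with the translation reduction.
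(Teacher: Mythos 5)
Your proposal is correct and follows exactly the route the paper takes: its entire proof of this corollary is ``Combine Propositions~\ref{prop:inversion1} and \ref{prop:inversion2}.'' You have simply made explicit the bookkeeping the paper leaves implicit --- the reduction to inversion in the origin and the identification of the multiplicity $m$ in Proposition~\ref{prop:inversion2} with the multiplicity of the centre of inversion on $\delta$ (via the isomorphism $\pi\colon\overline{\SS^d}\setminus\Pi_N\to\CC^d$ sending the south pole to $o$) --- and that identification is argued correctly.
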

\begin{proof}
Combine Propositions~\ref{prop:inversion1} and \ref{prop:inversion2}.
\end{proof}

Note that by Lemma~\ref{lem:odd}, all real rational normal curves in $\RR^d$ are unbounded when $d$ is odd.
On the other hand, when $d$ is even, there exist bounded real rational normal curves.
In the plane they are exactly the ellipses.
The inverse of an ellipse in a point on the ellipse is an acnodal cubic, which is a cubic curve projectively equivalent to the cubic with equation $y^2=x^3-x^2$.

In $\RR\PP^d$, a singular rational curve of degree $d+1$ has exactly one singularity, and the curve is isomorphic to a planar singular cubic. When the curve is real, the singularity is an acnode, crunode, or cusp depending on whether the singularity of the real planar cubic is an acnode, crunode or cusp.
See \cite{LS18-2}*{Section~3} for more on rational normal curves, rational singular curves (in particular, rational acnodal curves), and elliptic normal curves.

\begin{corollary}\label{cor:acnodal}
Let $d=2k$.
The inverse of a bounded $(k-1)$-spherical rational normal curve in $\RR^d$ in a point on the curve is a non-degenerate $k$-spherical rational curve of degree $d+1$, with an acnode in the point of inversion and no other singularities;
the inverse of a non-degenerate $k$-spherical rational curve of degree $d+1$ in $\RR^d$, with an acnode and no other singularities, in its acnode is a bounded $(k-1)$-spherical rational normal curve.
\end{corollary}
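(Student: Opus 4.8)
The plan is to derive both directions from Corollary~\ref{cor:inversion} together with two standard facts: that inversion restricts to a birational map on any curve, and that an irreducible non-degenerate curve of degree $d$ in $\CC\PP^d$ is a rational normal curve, hence smooth~\cite{H92}. Write $d=2k$. A bounded $(k-1)$-spherical rational normal curve $\delta$ of degree $d$ has spherical degree $d-(k-1)=k+1$, and since a rational normal curve is smooth, every point $p\in\delta$ is a smooth point; so Corollary~\ref{cor:inversion} gives immediately that its inverse $\delta'$ in $p$ is a $k$-spherical curve of degree $2k+1=d+1$, which is rational because $\delta$ is. Conversely, a non-degenerate $k$-spherical rational curve $\delta'$ of degree $d+1$ has spherical degree $(d+1)-k=k+1$, and its acnode is a double point, so Corollary~\ref{cor:inversion} gives that the inverse $\delta''$ of $\delta'$ in the acnode is a $(k-1)$-spherical curve of degree $2k=d$, again rational. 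In each direction it then remains to check non-degeneracy and to identify the singularities.

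For non-degeneracy I would use that a hypersphere and a hyperplane each invert to a hypersphere or a hyperplane, so that if $\delta'$ (respectively $\delta''$) lay in a hyperplane then $\delta$ (respectively $\delta'$) would lie in a hypersphere or a hyperplane. A rational normal curve is non-degenerate, and $\delta$ lies on no hypersphere: $\overline\delta$ meets $\Pi_\infty$ in $d$ points counted with multiplicity, while $(k-1)$-sphericity forces $\overline\delta\cap\Sigma_\infty$ to have only $2(k-1)=d-2$ points with multiplicity, so $\overline\delta\cap\Pi_\infty\not\subseteq\Sigma_\infty=\overline V\cap\Pi_\infty$ for any hypersphere $V$ by Theorem~\ref{thm:bezout}; hence $\delta'$ is non-degenerate. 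The same count applied to $\delta'$, which meets $\Pi_\infty$ in $d+1$ points but $\Sigma_\infty$ in only $2k=d$, shows $\delta'$ lies on no hypersphere, and $\delta'$ is non-degenerate by hypothesis, so $\delta''$ is non-degenerate; being irreducible, non-degenerate and of degree $d$ in $\CC\PP^d$, it is therefore a rational normal curve, in particular smooth, which is the conclusion of the converse direction.

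For the singularity I would normalise so that the point of inversion $p$ is the origin $o$. Since $\delta$ is bounded, none of its $d$ points at infinity is real; $d-2$ of them lie on $\Sigma_\infty$ in $k-1$ complex conjugate pairs, so the remaining two, $q$ and its conjugate $\bar q$, form a conjugate pair off $\Sigma_\infty$. Inversion in $o$ contracts $\Pi_\infty\setminus\Sigma_\infty$ to $o$, so among the points of $\overline\delta$ exactly $q$ and $\bar q$ map to $p$; and since $q$ and $\bar q$ each meet $\Pi_\infty$ with multiplicity $1$ (the total being $2$), a short local computation shows the restriction of inversion to $\overline\delta$ is an immersion at each of them, sending it to a smooth branch of $\delta'$ through $p$ with tangent direction that of $q$, respectively $\bar q$. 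As $q\ne\bar q$ these branches are distinct, so $\delta'$ has an ordinary node at $p$, which is an acnode because the branches are complex conjugate. Running the computation in reverse, using that inversion is an involution, the two points at infinity of $\delta''$ lying off $\Sigma_\infty$ are the conjugate pair of branch directions of the acnode, hence non-real; since every point of $\Sigma_\infty$ is non-real as well, $\delta''$ has no real point at infinity and so is bounded.

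It remains to see that $\delta'$ has no singularity other than the acnode, and for this I would realise $\delta'$ as a projection of a rational normal curve: being rational, non-degenerate and of degree $d+1$ in $\CC\PP^d$, it is the image of $\CC\PP^1$ under a base-point-free $(d+1)$-dimensional subsystem of the complete linear system of degree $d+1$, hence the projection of the degree-$(d+1)$ rational normal curve $\Gamma\subset\CC\PP^{d+1}$ from a point $z\notin\Gamma$. The singular points of $\pi_z(\Gamma)$ correspond exactly to the secant and tangent lines of $\Gamma$ through $z$; but $\Gamma$ has no trisecant, and since $d+1\ge 4$ any four of its points span a $\CC\PP^3$, so any two of its secant or tangent lines with distinct supports are skew. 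Hence $z$, lying off $\Gamma$, lies on at most one such line, so $\delta'$ has at most one singular point, necessarily the acnode already found. I expect the main obstacle to be the local computation in the third paragraph — verifying that inversion immerses $\overline\delta$ at the points at infinity with the claimed tangent directions, with no hidden degeneration where $\overline\delta$ meets $\Sigma_\infty$; the projection argument in the last paragraph is precisely what lets one bound the number of singularities without any local analysis near $\Sigma_\infty$.
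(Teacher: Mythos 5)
Your proposal is correct, but it takes a genuinely different route from the paper's, which is essentially a two-sentence argument: Propositions~\ref{prop:inversion1} and~\ref{prop:inversion2} supply the degree and sphericity in both directions, the classification recalled just before the corollary (from \cite{LS18-2}*{Section~3}) says that a singular rational curve of degree $d+1$ in $\RR\PP^d$ has a unique singularity which is an acnode, crunode or cusp, and the crunode and cusp are then excluded by a purely real observation: either of those has real points of the curve arbitrarily close to the singularity, so inverting back in the singularity would yield an unbounded curve, contradicting the boundedness of the rational normal curve. You instead construct the acnode directly, by tracking the two complex-conjugate points of $\overline{\delta}\cap\Pi_\infty$ lying off $\Sigma_\infty$ (which exist and are non-real precisely because $\delta$ is bounded and $(k-1)$-spherical) and checking that inversion contracts them to two conjugate smooth branches through the centre; and you bound the number of singularities by realising $\delta'$ as a one-point projection of the rational normal curve of degree $d+1$ and counting secant and tangent lines through the centre of projection. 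Both arguments are sound. The local computation you flag does go through: in coordinates the extended inversion sends $[0,q_1,\dots,q_d]$ with $\sum q_i^2\neq 0$ to the origin, and transversality of $\overline{\delta}$ with $\Pi_\infty$ at $q$ gives an immersed branch with tangent direction $q$, while points of $\overline{\delta}$ on $\Sigma_\infty$ or in $\CC^d$ contribute no branch at the centre. What your version buys is self-containedness (no appeal to the imported classification of singular rational curves, and the uniqueness of the singularity comes free from the secant-line count); what the paper's version buys is brevity and the reusable principle that boundedness of the inverse pins down the singularity type.
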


\begin{proof}
By Propositions~\ref{prop:inversion1} and \ref{prop:inversion2}, we only have to show that a bounded rational normal curve inverts into a curve with an acnode as singularity.
If the singularity is not an acnode, then it is a crunode or a cusp, and in either case, there are real points on the curve arbitrarily close to the singularity.
Then the inverse of this curve will be unbounded, contrary to assumption.
\end{proof}

Finally, we describe group laws on certain spherical curves that describe when points lie on a hypersphere.
Note that a real $k$-spherical elliptic normal curve has a unique real point at infinity.

\begin{prop}\label{prop:group}
Let $d = 2k$.
A bounded $(k-1)$-spherical rational normal curve or $k$-spherical elliptic normal curve in $\RR\PP^d$ has a group structure such that $d+2$ points (not including the real point at infinity on an elliptic normal curve) lie on a hypersphere if and only if they sum to the identity.
In the prior case, this group is isomorphic to $\RR/\ZZ$;
in the latter case, this group is isomorphic to $\RR/\ZZ$ or $\RR/\ZZ \times \ZZ_2$ depending on whether it has one or two semi-algebraically connected components.
\end{prop}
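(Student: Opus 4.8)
The plan is to deduce the statement from its counterpart for hyperplane sections, proved in our paper \cite{LS18-2}, by transporting everything through the stereographic projection $\pi$. First observe that both families of curves in question have spherical degree $k+1$: a $k$-spherical elliptic normal curve has degree $d+1$, hence spherical degree $(d+1)-k=k+1$, and a bounded $(k-1)$-spherical rational normal curve of degree $d$ has spherical degree $d-(k-1)=k+1$. Thus by Proposition~\ref{prop:inversion1}, in either case $\delta':=\overline{\pi^{-1}(\overline\delta\setminus\Sigma_\infty)}$ is a real curve of degree $2(k+1)=d+2$ lying on $\overline{\SS^d}\subset\CC\PP^{d+1}$ and meeting $\Pi_N$ in finitely many points, and $\pi^{-1}$ restricts to a surjective birational morphism $\overline\delta\to\delta'$ that is an isomorphism away from finitely many points. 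Since $\overline\delta$ meets $\Pi_\infty$ in $d+1$ points in the elliptic case and $d$ points in the rational case, of which exactly $2k$ resp.\ $2(k-1)$ lie on $\Sigma_\infty$, in both cases $\overline\delta$ has a point on $\Pi_\infty\setminus\Sigma_\infty$; as the closure of every hypersphere meets $\Pi_\infty$ exactly in $\Sigma_\infty$, this forces $\delta$ to lie on no hypersphere, and with the non-degeneracy of $\delta$ this makes $\delta'$ non-degenerate in $\CC\PP^{d+1}$. Finally, since $\pi$ and $\pi^{-1}$ preserve real points, $\delta'$ is real with the same number of semi-algebraically connected components as $\delta$.

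Next I would identify $\delta'$ exactly. A non-degenerate irreducible curve of degree $d+2$ in $\CC\PP^{d+1}$ has arithmetic genus at most $1$, so $\delta'$, having the same geometric genus as $\delta$, is either smooth or has a single node or cusp. In the elliptic case $\delta'$ has geometric genus $1$, hence is smooth, and being a smooth curve of degree one more than the ambient dimension it is an elliptic normal curve in $\RR\PP^{d+1}$. In the rational case $\delta'$ has geometric genus $0$; the unique conjugate pair $p_\infty,\overline{p_\infty}$ of points of $\overline\delta$ on $\Pi_\infty\setminus\Sigma_\infty$ is carried to the single point $N$ by $\pi^{-1}$, so $\delta'$ has two distinct branches through $N$, and the genus bound shows this is its only singularity. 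Since the two branches at $N$ are interchanged by complex conjugation, $N$ is an acnode (as in Corollary~\ref{cor:acnodal}), so $\delta'$ is a rational acnodal curve of degree $(d+1)+1$ in $\RR\PP^{d+1}$.

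Now I would invoke the group laws of \cite{LS18-2}: a real elliptic normal curve, respectively a real rational acnodal curve, of degree $(d+1)+1$ in $\RR\PP^{d+1}$ carries a group structure — the real points of its Jacobian, respectively of the generalised Jacobian $\CC^*$ of the acnodal curve — in which $d+2$ of its points lie on a hyperplane if and only if they sum to the identity; this group is $\RR/\ZZ$, or $\RR/\ZZ\times\ZZ_2$ in the elliptic case with two real components. Transporting this structure to $\delta$ along $\pi^{-1}$ — which in the rational case identifies $p_\infty$ and $\overline{p_\infty}$ with the acnode $N$ and is injective elsewhere, so yields a group on $\delta$ minus these two (non-real) points — gives $\delta$ a group of the stated isomorphism type. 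It then remains to match hyperspheres with hyperplanes: for points $p_1,\dots,p_{d+2}$ of $\delta$ other than the real point at infinity in the elliptic case, with images $p_1',\dots,p_{d+2}'$ on $\delta'$, the $p_i$ lie on a common hypersphere $V$ if and only if the $p_i'$ lie on the hyperplane section $\overline{\pi^{-1}(V)}$ of $\overline{\SS^d}$ — using that $\pi$ matches hyperplane sections of $\overline{\SS^d}$ with hyperspheres and hyperplanes of $\RR^d$ and conversely (see \cite{Pedoe}) — equivalently if and only if $\sum p_i'$, and hence $\sum p_i$, equals the identity. The real point at infinity must be excluded because, by B\'ezout, a hyperplane of $\RR\PP^d$ contains at most $d+1$ points of $\overline\delta$ and the closure of a proper hypersphere avoids it, so it cannot lie on a common hypersphere with $d+1$ further points of $\delta$.

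The main obstacle is the identification of $\delta'$ in the second paragraph: establishing its non-degeneracy, isolating its unique singularity and showing it is an acnode at $N$ in the rational case, and confirming that $\delta'$ — which is bounded inside $\overline{\SS^d}$ — is genuinely one of the curves for which \cite{LS18-2} provides a group law. Once $\delta'$ has been pinned down, the remaining steps, including the treatment of the point at infinity, are routine.
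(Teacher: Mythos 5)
Your proposal is correct and follows essentially the same route as the paper: lift $\delta$ (of spherical degree $k+1$ in both cases) through stereographic projection to a degree-$(d+2)$ curve $\delta'$ on $\overline{\SS^d}\subset\CC\PP^{d+1}$, identify $\delta'$ as an elliptic normal or rational acnodal curve, import the group laws of \cite{LS18-2}*{Propositions 3.1 and 3.9}, and transfer back via the correspondence between hyperplane sections of $\overline{\SS^d}$ and hyperspheres. The only difference is that you re-derive the identification of $\delta'$ (genus bound, conjugate branches at $N$) where the paper simply cites Proposition~\ref{prop:inversion1} and Corollary~\ref{cor:acnodal}.
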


\begin{proof}
Let $\delta \subset \RR\PP^d \subset \CC\PP^d$ be a curve of spherical degree $k+1$.
Then by Proposition~\ref{prop:inversion1}, $\delta' := \overline{\pi^{-1}(\overline{\delta})}$ is a curve in $\CC\PP^{d+1}$ of degree $2(k+1) = d+2$.
If $\delta$ is a $(k-1)$-spherical bounded rational normal curve, then by Corollary~\ref{cor:acnodal}, $\delta'$ is a rational acnodal curve in $\CC\PP^{d+1}$.
If $\delta$ is a $k$-spherical elliptic normal curve, then $\delta'$ is an elliptic normal curve in $\CC\PP^{d+1}$.
In both cases, $\delta' \cap \RR\PP^d$ has a group structure such that $d+2$ real points on $\delta'$ lie on a hyperplane if and only if they sum to the identity, and this group is isomorphic to $\RR/\ZZ$ when $\delta'$ is acnodal, and isomorphic to $\RR/\ZZ$ or $\RR/\ZZ \times \ZZ_2$ if $\delta'$ is elliptic, depending on whether it has one or two semi-algebraically connected components \cite{LS18-2}*{Propositions~3.9 and~3.1}.
Since a generic hyperplane intersects $\overline{\SS^d}$ in a $(d-1)$-sphere and stereographic projection takes hyperspheres (not containing the north pole) to hyperspheres in $\RR\PP^d$, $\pi$ transfers the group to $\delta$ in such a way that $d+2$ points on $\delta$ lie on a hypersphere if and only if they sum to the identity.
\end{proof}

\section{Proof of Theorem~\ref{thm:structure}}\label{sec:proof}
Since ordinary hyperspheres spanned by $P \subset \RR^d$ are in one-to-one correspondence with ordinary hyperplanes in $\pi^{-1}(P) \subset \RR^{d+1}$, 
Theorem~\ref{thm:structure} is a simple consequence of the following structure theorem for sets with few ordinary hyperplanes \cite{LS18-2}, combined with the results from Section~\ref{sec:curves}.

\begin{theorem}[\cite{LS18-2}]\label{thm:hyperplane}
Let $d \ge 4$, $K > 0$, and suppose $n \ge C\max\{(dK)^8, d^32^dK\}$ for some sufficiently large absolute constant $C > 0$.
Let $P$ be a set of $n$ points in $\RR\PP^d$ where every $d$ points span a hyperplane.
If $P$ spans at most $K\binom{n-1}{d-1}$ ordinary hyperplanes, then $P$ differs in at most $O(d2^dK)$ points from a configuration of one of the following types:
\begin{enumerate}[label=\rom]
\item A subset of a hyperplane;\label{case:hyperplane}
\item A coset $H \oplus x$ of a subgroup $H$ of an elliptic normal curve or the smooth points of a rational acnodal curve of degree $d+1$, for some $x$ such that $(d+1)x \in H$.\label{case:curve}
\end{enumerate}
\end{theorem}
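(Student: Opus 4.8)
The plan is to prove the theorem by induction on the dimension $d$, taking the Green--Tao structure theorem for ordinary lines \cite{GT13} (the case $d=2$) as the base. Throughout, the key device is \emph{projection from a point}: for $p\in P$, projecting $P\setminus\{p\}$ from $p$ onto a generic hyperplane yields a set $P_p$ of $n-1$ points in $\RR\PP^{d-1}$. Because every $d$ points of $P$ span a hyperplane, the fibres of the projection are generically singletons, so a hyperplane $h'\subset\RR\PP^{d-1}$ with $|P_p\cap h'|=d-1$ pulls back to a hyperplane $h\ni p$ with $|P\cap h|=d$. Thus ordinary hyperplanes of $P_p$ correspond exactly to ordinary hyperplanes of $P$ through $p$. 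Summing over $p\in P$ counts each ordinary hyperplane of $P$ exactly $d$ times, so on average $P_p$ spans about $\frac{d}{n}K\binom{n-1}{d-1}\approx\frac{d}{d-1}K\binom{n-2}{d-2}$ ordinary hyperplanes. Choosing $p$ below the average lets us apply the inductive hypothesis in dimension $d-1$, at the cost of inflating $K$ by a factor of roughly $d/(d-1)$; iterating down to the plane and controlling the points discarded at each scale is what produces the stated $O(d2^dK)$ error term, which must be tracked carefully.

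After the inductive step, $P_p$ differs in few points from one of the allowed configurations in $\RR\PP^{d-1}$: a subset of a hyperplane, or a coset on an elliptic normal or rational acnodal curve $\gamma_p$ of degree $d$. We now \emph{lift} this structure. If $P_p$ lies mostly on a hyperplane $\Pi_p$, then $P\setminus\{p\}$ lies mostly on $\pi_p^{-1}(\Pi_p)$, the cone over $\Pi_p$ with apex $p$, which is itself a hyperplane through $p$; comparing the conclusions for two projection points forces these hyperplanes to coincide (their intersection is a $(d-2)$-flat, which cannot carry most of $P$ under the general-position hypothesis), placing $P$ in case~\ref{case:hyperplane}. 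Otherwise $P\setminus\{p\}$ lies mostly on the cone $C_p:=\overline{\pi_p^{-1}(\gamma_p)}$, a surface of degree $d$ in $\RR\PP^d$ with apex $p$. Since the sought curve $\gamma$ should contain $p$, projecting a degree-$(d+1)$ curve from a point $p\in\gamma$ drops its degree to $d$, matching $\gamma_p$; conversely I expect to recover $\gamma$ as the distinguished component of the intersection $C_p\cap C_{p'}$ of two such cones, with all but boundedly many points of $P$ lying on it.

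The main obstacle is precisely this gluing step. By B\'ezout (Theorem~\ref{thm:bezout}) the intersection $C_p\cap C_{p'}$ has degree as large as $d^2$, and I must isolate within it a single irreducible non-degenerate curve of degree exactly $d+1$ carrying all but $O(d2^dK)$ points of $P$, while accounting for the spurious excess components (for instance the line $pp'$ and other accidental pieces). This requires verifying that the two cones meet properly, that the common curve is independent of the choice of projection points, and that the general-position hypotheses needed to apply the projection argument survive at each scale. Showing that $\gamma$ is irreducible and non-degenerate is part of the same difficulty: a reducible or hyperplane-contained curve would force too many collinear or coplanar points, contradicting that every $d$ points of $P$ span a hyperplane once $P$ is not itself essentially planar.

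It remains to classify $\gamma$ and extract the coset structure. By Castelnuovo's bound, a non-degenerate irreducible curve of degree $d+1$ in $\CC\PP^d$ is either a smooth elliptic normal curve of genus $1$ or a rational curve with a single double point. On such a curve there is a group law, as in Proposition~\ref{prop:group}, under which $d+1$ points lie on a hyperplane if and only if they sum to a fixed constant, and the number of ordinary hyperplanes spanned by a subset is governed by how nearly it is a union of cosets of a finite subgroup. Since the real smooth points of a cuspidal or crunodal curve form the groups $\RR$ and $\RR^\times$, which have no large finite subgroups, the few-ordinary-hyperplanes hypothesis forces the singularity to be an acnode, whose smooth real points form $\RR/\ZZ$; together with the elliptic case this yields the groups $\RR/\ZZ$ or $\RR/\ZZ\times\ZZ_2$. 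Finally, a counting argument shows that if $P$ differs in few points from a coset $H\oplus x$, then $H$ must be finite and $(d+1)x\in H$ (so that $d+1$ coset elements sum to the hyperplane constant), placing $P$ in case~\ref{case:curve} and completing the induction.
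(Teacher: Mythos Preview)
This theorem is not proved in the present paper at all: it is quoted from \cite{LS18-2} and used as a black box in the derivation of Theorem~\ref{thm:structure} via stereographic projection. There is therefore no proof here to compare your proposal against.

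That said, your outline is a plausible sketch of the projection-from-a-point strategy one expects such a proof to follow. But as written it is a plan with explicitly acknowledged gaps rather than a proof. The crucial step---isolating a single irreducible non-degenerate curve of degree $d+1$ inside the intersection $C_p\cap C_{p'}$ of two degree-$d$ cones, which by B\'ezout could have total degree $d^2$---is flagged as ``the main obstacle'' but not resolved. You would also need to handle the mixed case where the projection from one point lands in the hyperplane configuration and from another lands on a curve, to verify that the general-position hypothesis actually propagates so that the inductive hypothesis applies at every stage, and to track the constants so that the accumulated error is genuinely $O(d2^dK)$ rather than something worse. These are precisely the places where the real work lies, and your proposal identifies them without carrying them out.
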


\begin{proof}[Proof of Theorem~\ref{thm:structure}]
Projecting $P$ stereographically, we obtain a set $P' := \pi^{-1}(P) \subset \overline{\SS^d} \subset \RR\PP^{d+1}$ of $n$ points, no $d+1$ of which lie on a hyperplane, spanning at most $K\binom{n}{d}$ ordinary hyperplanes. 
We can thus apply Theorem~\ref{thm:hyperplane} to get that $P'$ differs in at most $O(d2^dK)$ points from
\begin{enumerate}[label=\arab]
\item a subset of a hyperplane, or \label{case:1}
\item a coset $H \oplus x$ of a subgroup $H$ of an elliptic normal curve or the smooth points of a rational acnodal curve of degree $d+2$, for some $x$ such that $(d+2)x \in H$. \label{case:2}
\end{enumerate}
In the latter case, since $P'$ is contained in a hypersphere, by Theorem~\ref{thm:bezout}, the degree $d+2$ curve is also contained in the hypersphere.
In particular, it is bounded.

Suppose $d$ is odd (so that $d+2$ is odd).
Then Case~\ref{case:2} does not occur, as Lemma~\ref{lem:odd} implies an odd degree curve is always unbounded.
Thus projecting Case~\ref{case:1} back down to $\RR^d$, we get that $P$ differs in at most $O(d2^dK)$ points from a subset of a hypersphere or a hyperplane.

Now suppose $d$ is even.
If we are in Case~\ref{case:1}, then we are in the same situation as the odd case.
So assume we are in Case~\ref{case:2}, and all but at most $O(d2^dK)$ points of $P'$ lie on a degree $d+2$ curve $\delta' \subset \RR^{d+1}$, which is either elliptic or acnodal.
Projecting (stereographically) back to $\RR^d$, we get that all but at most $O(d2^dK)$ points of $P$ lie on a curve $\delta \subset \RR^d$.
By Proposition~\ref{prop:inversion2}, we get one of the following cases, depending on the multiplicity of the north pole $N$ on $\delta'$:
\begin{enumerate}[label=\abc]
\item $N$ is a double point of $\delta'$, which means $\delta'$ is acnodal, and thus $\delta$ is a bounded $(k-1)$-spherical rational normal curve of degree $d$;\label{case:a}
\item $N$ is a smooth point of $\delta'$, in which case $\delta$ is a $k$-spherical elliptic normal curve or rational acnodal curve of degree $d+1$;\label{case:b}
\item $N$ does not lie on $\delta'$, in which case $\delta$ is a $(k+1)$-spherical curve of degree $d+2$.\label{case:c}
\end{enumerate}
Note that the group structure mentioned in the statement of the theorem is inherited from that in Theorem~\ref{thm:hyperplane}, and is detailed in Proposition~\ref{prop:group}.
By Corollary~\ref{cor:inversion}, the curve from \ref{case:c} can be inverted to a curve as in \ref{case:b}, and by Corollary~\ref{cor:acnodal}, the rational curve from \ref{case:b} can be inverted to a curve as in \ref{case:a}.
\end{proof}

\section{Proof of Theorems~\ref{thm:extremal} and \ref{thm:extremal2}}\label{sec:proof2}
Similar to the previous section, Theorems~\ref{thm:extremal} and \ref{thm:extremal2} follow from the following extremal results for ordinary hyperplanes and $(d+1)$-point hyperplanes from \cite{LS18-2}.

\begin{theorem}\label{thm:hyperplane2}
Let $d \ge 4$ and let $n\ge Cd^3 2^d$ for some sufficiently large absolute constant $C > 0$.
The minimum number of ordinary hyperplanes spanned by a set of $n$ points in $\RR\PP^d$, not contained in a hyperplane and where every $d$ points span a hyperplane, is
\[ \binom{n-1}{d-1} - O\left(d 2^{-\frac{d}{2}}\binom{n}{\lfloor \frac{d-1}{2} \rfloor}\right). \]
This minimum is attained by a coset of a subgroup of an elliptic normal curve or the smooth points of a rational acnodal curve of degree $d+1$, and when $d+1$ and $n$ are coprime, by $n-1$ points in a hyperplane together with a point not in the hyperplane.
\end{theorem}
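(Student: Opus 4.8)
The plan is to feed the structure theorem for ordinary hyperplanes (Theorem~\ref{thm:hyperplane}) into an exact count for each of its two output families, and then to show that an $O(d2^d)$-point perturbation of such a family cannot beat the resulting minimum. Write $\mathrm{ord}(P)$ for the number of ordinary hyperplanes of $P$. The key bookkeeping device is the double-counting identity $\sum_{\Pi}\binom{|P\cap\Pi|}{d}=\binom{n}{d}$, the sum over hyperplanes $\Pi$ with $|P\cap\Pi|\ge d$, which holds precisely because the hypothesis forces every $d$ points of $P$ to span a unique hyperplane. Since the explicit configurations below each span at most $\binom{n-1}{d-1}$ ordinary hyperplanes, any extremal $P$ satisfies the hypotheses of Theorem~\ref{thm:hyperplane} with $K=1$, so up to $O(d2^d)$ points it is either (i) contained in a hyperplane, or (ii) a coset $H\oplus x$ of a subgroup $H$ of an elliptic normal curve, or of the smooth locus of a rational acnodal curve, of degree $d+1$, with $(d+1)x\in H$.

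\emph{The coset family.} Suppose $P=H\oplus x$ lies on such a degree-$(d+1)$ curve $\delta$. By B\'ezout (Theorem~\ref{thm:bezout}) and irreducibility of $\delta$, every hyperplane meets $P$ in at most $d+1$ points, so the identity collapses to $\mathrm{ord}(P)=\binom{n}{d}-(d+1)\,t(P)$, where $t(P)$ counts the $(d+1)$-point hyperplanes. By the group law on $\delta$ (as in Proposition~\ref{prop:group} and \cite{LS18-2}), $d+1$ points of $\delta$ lie on a common hyperplane exactly when they sum to the identity, so $t(P)$ is the number of $(d+1)$-element subsets of $H$ summing to $\ominus(d+1)x\in H$. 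Since $H$ is $\ZZ/n$, or $\ZZ/n\times\ZZ_2$ on a two-component elliptic curve, evaluating this subset count with the characters of $H$ yields $\tfrac1n\binom{n}{d+1}$ plus Ramanujan-type correction terms indexed by the divisors $>1$ of $\gcd(n,d+1)$ (plus a parity term in the two-component case). As $\binom{n}{d}-\tfrac{d+1}{n}\binom{n}{d+1}=\binom{n-1}{d-1}$, this gives $\mathrm{ord}(P)=\binom{n-1}{d-1}-(d+1)\cdot(\text{correction})$: exactly $\binom{n-1}{d-1}$ when $\gcd(n,d+1)=1$, and otherwise, after choosing the representative $x$ so that the term for the least prime divisor of $\gcd(n,d+1)$ is positive and maximal, smaller by $\Theta(2^{-d/2}\binom{n}{\lfloor(d-1)/2\rfloor})$, with a crude bound on all correction terms capping the deficit at $O(d2^{-d/2}\binom{n}{\lfloor(d-1)/2\rfloor})$. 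Optimising over the curve and representative realises the claimed minimum and accounts for the quasipolynomial behaviour noted in the introduction.

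\emph{The hyperplane family and the near-pencil.} Suppose instead $P$ is $O(d2^d)$-close to, but not contained in, a hyperplane $\Pi$; write $P=(P\cap\Pi)\cup Q$ with $\emptyset\ne Q$ and $|Q|=O(d2^d)$. The hypothesis that every $d$ points span a hyperplane already forces $P\cap\Pi$ into general position within $\Pi$ (no $d$ of them on a $(d-2)$-flat), so for each $q\in Q$ and each $(d-1)$-subset $S$ of $P\cap\Pi$ the hyperplane $\langle q,S\rangle$ meets $\Pi$ in exactly $S$ and fails to be ordinary only when it contains another point of $Q$; hence $\mathrm{ord}(P)\ge|Q|\binom{|P\cap\Pi|}{d-1}-O\big(|Q|^2\binom{|P\cap\Pi|}{d-2}\big)$, which for $|Q|=O(d2^d)\ll n/d$ already exceeds $\binom{n-1}{d-1}$, with equality exactly when $|Q|=1$. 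In particular the pure near-pencil --- $n-1$ points in general position in a hyperplane together with one point off it --- spans exactly $\binom{n-1}{d-1}$ ordinary hyperplanes, and this family never does better; combined with the coset computation, this yields the stated extremal value and shows the near-pencil is also extremal when $\gcd(n,d+1)=1$.

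\emph{Stability, and the main obstacle.} Finally, among configurations within $m=O(d2^d)$ points of a type-(ii) coset $P^{*}$ the coset itself is optimal. A direct comparison of $\mathrm{ord}(P)$ and $\mathrm{ord}(P^{*})$ is useless, since toggling $m$ points can change either by $\Omega\big(m\binom{n}{d-1}\big)$, far beyond the target error; one must show instead that every deviation helps. Deleting a point $p_0$ of $P^{*}$ destroys only the $O\big(\tfrac dn\binom{n-1}{d-1}\big)$ ordinary hyperplanes through $p_0$ but turns the $\Theta\big(\tfrac1d\binom{n-1}{d-1}\big)$ $(d+1)$-point hyperplanes through $p_0$ into ordinary ones, a large net gain; and since all ordinary hyperplanes of $P^{*}$ are tangent hyperplanes (by the group law the residual intersection point otherwise lies in $P^{*}$), adding a point of $\delta$ outside $P^{*}$ destroys none of them and can only create new ones, while adding a point off $\delta$ is controlled by the same tangency structure and, once made precise, again creates at least as many ordinary hyperplanes as it destroys. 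Iterating these lopsided trade-offs over the $m$ exceptional points gives $\mathrm{ord}(P)\ge\mathrm{ord}(P^{*})$, so the extremal value is the coset minimum $\binom{n-1}{d-1}-O\big(d2^{-d/2}\binom{n}{\lfloor(d-1)/2\rfloor}\big)$, attained by an optimal coset and by the near-pencil when $\gcd(n,d+1)=1$. Making the trade-off inequalities uniform in $d$ and controlling the cross-terms when several exceptional points interact is the heart of the argument and the step I expect to be the main obstacle; the Ramanujan-sum evaluation of the coset count is the secondary difficulty.
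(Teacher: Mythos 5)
First, a point of order: this paper does not prove Theorem~\ref{thm:hyperplane2} at all --- it is imported verbatim from the authors' earlier paper \cite{LS18-2}, where the actual proof lives. So there is no in-paper argument to compare against; what you have written is a blind reconstruction of the proof in \cite{LS18-2}. Your overall architecture does match that proof: apply the structure theorem with $K=1$, compute the exact count on a coset via the group law and a character/Ramanujan-sum evaluation of the number of $(d+1)$-subsets with distinct elements summing to $\ominus(d+1)x$, handle the near-hyperplane case by direct counting, and then show stability. Your individual observations are sound where you make them precise: the identity $\mathrm{ord}(P)=\binom{n}{d}-(d+1)t(P)$ on an irreducible non-degenerate degree-$(d+1)$ curve, the algebra $\binom{n}{d}-\tfrac{d+1}{n}\binom{n}{d+1}=\binom{n-1}{d-1}$, the exact value $\binom{n-1}{d-1}$ for the near-pencil, and the fact that every ordinary hyperplane of a full coset is tangent to the curve (since the residual intersection point lies in the coset $x\oplus H$ and hence in $P^*$ unless it coincides with one of the $d$ points).

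The genuine gap is the one you yourself flag: the stability step is a plan, not a proof, and it is the main technical content of the theorem. Your two clean trade-offs (deleting a coset point is a net gain of order $\tfrac1d\binom{n-1}{d-1}$; adding a point of $\delta$ destroys no ordinary hyperplane of $P^*$) only handle pure deletions and pure on-curve additions. The case of adding a point $q\notin\delta$ is asserted to be ``controlled by the same tangency structure,'' but a single such $q$ can a priori lie on up to order $\tfrac1d\binom{n}{d-1}$ ordinary hyperplanes of $P^*$, and with $m=O(d2^d)$ simultaneous additions and deletions the created and destroyed hyperplanes interact: a hyperplane ``created'' by one deletion can be ``re-destroyed'' by an added point, so the lopsided trade-offs cannot simply be iterated independently. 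One needs a uniform inequality of the form ``any set obtained from a coset by adding $k$ and removing $\ell$ points, $k+\ell\ge 1$, spans at least as many ordinary hyperplanes as the coset,'' with error terms controlled in $d$; this occupies a substantial portion of \cite{LS18-2} and is not supplied here. A secondary, smaller gap is the sign analysis in the Ramanujan-sum formula: you need to verify both that some admissible representative $x$ makes the total correction nonnegative (so the coset actually beats $\binom{n-1}{d-1}$ when $\gcd(n,d+1)>1$) and that no choice makes the deficit exceed $O\bigl(d2^{-d/2}\binom{n}{\lfloor(d-1)/2\rfloor}\bigr)$; ``choosing the term for the least prime divisor to be positive and maximal'' gestures at this but does not establish it.
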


\begin{theorem}\label{thm:hyperplane3}
Let $d \ge 4$ and let $n\ge Cd^3 2^d$ for some sufficiently large absolute constant $C > 0$.
The maximum number of $(d+1)$-point hyperplanes spanned by a set of $n$ points in $\RR\PP^d$ where every $d$ points span a hyperplane is
\[ \frac{1}{d+1} \binom{n-1}{d} + O\left(2^{-\frac{d}{2}}\binom{n}{\lfloor \frac{d-1}{2} \rfloor}\right). \]
This maximum is attained by a coset of a subgroup of an elliptic normal curve or the smooth points of a rational acnodal curve of degree $d+1$.
\end{theorem}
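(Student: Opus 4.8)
The plan is to derive this from the companion result on ordinary hyperplanes (Theorem~\ref{thm:hyperplane2}) by a short double-counting argument, and then to check that the configuration which minimises the number of ordinary hyperplanes also maximises the number of $(d+1)$-point hyperplanes.

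\textbf{Upper bound.} Let $P$ be an $n$-point set in $\RR\PP^d$ in which every $d$ points span a hyperplane, and for $k\ge d$ let $t_k$ denote the number of hyperplanes containing exactly $k$ points of $P$. Counting the $d$-subsets of $P$ according to the unique spanned hyperplane they determine gives
\[ \sum_{k\ge d}\binom{k}{d}t_k=\binom{n}{d}. \]
Discarding the nonnegative terms with $k\ge d+2$ and solving for $t_{d+1}$ gives $t_{d+1}\le\frac{1}{d+1}\bigl(\binom{n}{d}-t_d\bigr)$. If $P$ lies in a hyperplane then $t_{d+1}=0$ and there is nothing to prove; otherwise I would invoke Theorem~\ref{thm:hyperplane2} to bound $t_d$ below by the stated minimum, and then use the Pascal identity $\binom{n}{d}-\binom{n-1}{d-1}=\binom{n-1}{d}$ together with $\frac{1}{d+1}\cdot O\bigl(d2^{-d/2}\binom{n}{\lfloor \frac{d-1}{2} \rfloor}\bigr)=O\bigl(2^{-d/2}\binom{n}{\lfloor \frac{d-1}{2} \rfloor}\bigr)$ to obtain the claimed upper bound on $t_{d+1}$.

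\textbf{Attainment.} For the matching lower bound I would take the extremal configuration $\mathcal E$ of Theorem~\ref{thm:hyperplane2}: a coset $H\oplus x$ of a subgroup $H$ of order $n$ of an elliptic normal curve, or of the smooth points of a rational acnodal curve $\delta\subset\RR\PP^d$ of degree $d+1$, with $(d+1)x\in H$; by that theorem $\mathcal E$ spans exactly the minimum number of ordinary hyperplanes. The extra input needed is that $\delta$ is irreducible, non-degenerate and of degree $d+1$, so by B\'ezout's theorem (Theorem~\ref{thm:bezout}) every hyperplane of $\RR\PP^d$ meets $\delta$ in at most $d+1$ points; in the acnodal case the acnode is not a point of $\mathcal E$, so in either case no hyperplane contains $d+2$ points of $\mathcal E$. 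Hence $t_k=0$ for all $k\ge d+2$, the identity above collapses to $t_d+(d+1)t_{d+1}=\binom{n}{d}$, and substituting the value of $t_d$ shows that $\mathcal E$ spans $\frac{1}{d+1}\binom{n-1}{d}+O\bigl(2^{-d/2}\binom{n}{\lfloor \frac{d-1}{2} \rfloor}\bigr)$ many $(d+1)$-point hyperplanes, matching the upper bound.

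Almost everything here is routine once Theorem~\ref{thm:hyperplane2} is granted. The two points requiring care are the B\'ezout argument that the extremal curve admits no hyperplane through $d+2$ of its points (the one genuinely geometric ingredient not already packaged in Theorem~\ref{thm:hyperplane2}), and checking that the error terms in the upper bound and in the construction are of the same order, so that together they pin down the maximum as $\frac{1}{d+1}\binom{n-1}{d}+O\bigl(2^{-d/2}\binom{n}{\lfloor \frac{d-1}{2} \rfloor}\bigr)$; I expect the former to be the main, if modest, obstacle.
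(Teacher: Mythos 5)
The paper does not prove this statement at all: Theorem~\ref{thm:hyperplane3} is quoted verbatim from the companion paper \cite{LS18-2} and used as a black box, so there is no in-paper proof to compare against. Your derivation from Theorem~\ref{thm:hyperplane2} is nevertheless correct and is the standard Green--Tao-style argument: the identity $\sum_{k\ge d}\binom{k}{d}t_k=\binom{n}{d}$ is valid because every $d$-subset spans a unique hyperplane, the degenerate case $P\subset\Pi$ indeed gives $t_{d+1}=0$ (any hyperplane other than $\Pi$ meets $\Pi$ in a $(d-2)$-flat, which contains at most $d-1$ points of $P$), and the B\'ezout step correctly kills all $t_k$ with $k\ge d+2$ for the curve configuration, so that the coset attains the bound with equality. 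One point worth making explicit, which you implicitly get right: the minimiser of ordinary hyperplanes is not unique --- when $\gcd(d+1,n)=1$ the ``$n-1$ points in a hyperplane plus one point'' configuration also minimises $t_d$ but spans \emph{no} $(d+1)$-point hyperplanes --- so the attainment argument must use the curve coset specifically, as you do, rather than ``the'' extremal configuration of Theorem~\ref{thm:hyperplane2}. Your argument in fact pins the maximum down exactly as $\frac{1}{d+1}\bigl(\binom{n}{d}-t_d^{\min}\bigr)$, which is slightly stronger than the stated asymptotic form.
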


\begin{proof}[Proof of Theorem~\ref{thm:extremal}]
This theorem follows from Theorem~\ref{thm:hyperplane2} and stereographic projection, similar to the proof of Theorem~\ref{thm:structure}.
\end{proof}

\begin{proof}[Proof of Theorem~\ref{thm:extremal2}]
This theorem follows from Theorem~\ref{thm:hyperplane3} and stereographic projection, similar to the proof of Theorem~\ref{thm:structure}.
Note that in the case when $d$ is odd, the extremal configurations in Theorem~\ref{thm:hyperplane3} lying on an algebraic curve of degree $d+2$ cannot exist, as this curve has to lie on $\overline{\SS^d}$, hence is bounded, which contradicts Lemma~\ref{lem:odd}.
\end{proof}

\begin{bibdiv}
\begin{biblist}

\bib{BB94}{article}{
      author={B{\'a}lintov{\'a}, A.},
      author={B{\'a}lint, V.},
       title={On the number of circles determined by $n$ points in the
  {Euclidean} plane},
        date={1994},
     journal={Acta Math.\ Hungar.},
      volume={63},
      number={3},
       pages={283\ndash 289},
}

\bib{E67}{article}{
      author={Elliott, P. D. T.~A.},
       title={On the number of circles determined by $n$ points},
        date={1967},
     journal={Acta Math.\ Acad.\ Sci.\ Hungar.},
      volume={18},
       pages={181\ndash 188},
}

\bib{GT13}{article}{
   author={Green, Ben},
   author={Tao, Terence},
   title={On sets defining few ordinary lines},
   journal={Discrete Comput.\ Geom.},
   volume={50},
   date={2013},
   number={2},
   pages={409--468},
}

\bib{H92}{book}{
   author={Harris, J.},
   title={Algebraic Geometry: A First Course},
   publisher={Springer-Verlag},
   date={1992},
}

\bib{Hulek}{book}{
    author={Hulek, K.},
    title={Elementary Algebraic Geometry},
    publisher={American Mathematical Society},
    date={2000},
}

\bib{J77}{article}{
      author={Johnson, W.~W.},
       title={Classification of plane curves with reference to inversion},
        date={1877},
     journal={The Analyst},
      volume={4},
      number={2},
       pages={42\ndash 47},
}

\bib{LMMSSZ18}{article}{
   author={Lin, Aaron},
   author={Makhul, Mehdi},
   author={Mojarrad, Hossein Nassajian},
   author={Schicho, Josef},
   author={Swanepoel, Konrad},
   author={de Zeeuw, Frank},
   title={On Sets Defining Few Ordinary Circles},
   journal={Discrete Comput. Geom.},
   volume={59},
   date={2018},
   number={1},
   pages={59--87},
}

\bib{LS18-2}{article}{
   author={Lin, Aaron},
   author={Swanepoel, Konrad},
   title={On sets defining few ordinary hyperplanes},
   journal={Discrete Analysis},
   volume={4},
   date={2021},
   pages={34pp.},
}

\bib{Pedoe}{book}{
   author={Pedoe, Dan},
   title={Geometry, a comprehensive course},
   edition={2},
   publisher={Dover Publications, Inc., New York},
   date={1988},
}

\bib{PS10}{article}{
   author={Purdy, George B.},
   author={Smith, Justin W.},
   title={Lines, circles, planes and spheres},
   journal={Discrete Comput. Geom.},
   volume={44},
   date={2010},
   number={4},
   pages={860--882},
}

\bib{Reid}{book}{
   author={Reid, M.},
   title={Undergraduate algebraic geometry},
   publisher={Cambridge University Press, Cambridge},
   date={1988},
}

\bib{RS85}{book}{
   author={Semple, J. G.},
   author={Roth, L.},
   title={Introduction to algebraic geometry},
   note={Reprint of the 1949 original},
   publisher={The Clarendon Press, Oxford University Press, New York},
   date={1985},
}

\bib{Z11}{article}{
      author={Zhang, R.},
       title={On the number of ordinary circles determined by $n$ points},
        date={2011},
     journal={Discrete Comput.\ Geom.},
      volume={46},
      number={2},
       pages={205\ndash 211},
}

\end{biblist}
\end{bibdiv}

\end{document}